\newcounter{Mycounter}[section]
\newtheorem{Theo}[Mycounter]{Theorem} 
\newtheorem{Lem}[Mycounter]{Lemma} 
\newtheorem{Rem}[Mycounter]{Remark}
\newcommand{\setR}{\mathbb{R}} 
\newcommand{\setC}{\mathbb{C}}
\newcommand{\setN}{\mathbb{N}}
\newcommand{\Oi}{\Omega_{\rm int}} 
\newcommand{\Oe}{\Omega_{\rm ext}}
\newcommand{\smat}[1]{\left(\begin{smallmatrix} #1 \end{smallmatrix}\right)}
\DeclareMathOperator{\MT}{ {\cal M}_{\kappa_0} }
\DeclareMathOperator{\MTinv}{ {\cal M}_{\kappa_0}^{-1} }
\DeclareMathOperator{\LT}{{\cal L}}
\DeclareMathOperator{\OpT}{\cal T}
\newcommand{\BilF}[2]{a\!\left(#1,#2\right)}
\newcommand{\LiF}[1]{g\!\left(#1\right)}
\newcommand{\fe}{u}
\newcommand{\fz}{v}
\newcommand{\Fe}{U}
\newcommand{\Fz}{V}
\newcommand{\bfe}{\mathbf{\fe}}
\newcommand{\bfz}{\mathbf{\fz}}
\newcommand{\bFe}{\mathbf{\Fe}}
\newcommand{\El}{K}
\newcommand{\ep}{\varepsilon}
\newcommand{\BLM}{F}
\DeclareMathOperator{\Dop}{\cal D}
\DeclareMathOperator{\Iop}{\cal I}
\DeclareMathOperator{\id}{\mathrm id}
\DeclareMathOperator{\curl}{\rm \bf curl}
\DeclareMathOperator{\scurl}{\rm curl}
\DeclareMathOperator{\divo}{\rm div}
\DeclareMathOperator{\Null}{ 0}
\def\vector#1#2#3{\left(\!\!\begin{array}{c} #1 \\ #2 \\ #3 \end{array}\!\!\right)}
\def\vect#1#2{\left(\!\!\begin{array}{c} #1 \\ #2 \end{array}\!\!\right)}
\def\oversym#1#2{\mathop{#1}\limits^{#2}}
\def\RA#1{\oversym\longrightarrow{#1}}
\title{High order Curl-conforming Hardy space infinite elements for 
  exterior Maxwell problems}
\author{Lothar Nannen \and Thorsten Hohage \and Achim Sch\"adle  \and Joachim Sch\"oberl}
\date{}
\begin{document}

\maketitle

\begin{abstract} 
A construction of prismatic Hardy space infinite elements to
discretize wave equations on unbounded domains $\Omega$ in $H^1_{\rm loc}(\Omega)$, $H_{\rm loc}(\curl;\Omega)$ and
$H_{\rm loc}(\divo;\Omega)$ is presented.  As our motivation is to solve Maxwell's
equations we take care that these infinite elements fit into the
discrete de Rham diagram, i.e.\ they span discrete spaces, which
together with the exterior derivative form an exact sequence.
Resonance as well as scattering problems are considered in the
examples. Numerical tests indicate super-algebraic convergence
in the number of additional unknows per degree of freedom on the
coupling boundary that are required to realize the Dirichlet to Neumann map.
\end{abstract}


\section{Introduction} 

Time-harmonic Maxwell's equations are used to model resonance 
and electromagnetic scattering problems, that occur 
for example in the modelling of photolitography processes~\cite{Burger2005}, 
the simulation of meta-materials~\cite{Gansel:10}, photonic
cavities~\cite{Karletal:09} or plasmon resonances.

We consider the following second order elliptic equations on $\Omega \subset \setR^3$.
First the time-harmonic second order Maxwell system for the
electric field which in variational form in $H_{\rm loc}(\curl;\Omega)$ is given by
\begin{equation}
  \label{eq:Maxwell}
  \int_\Omega  \nabla \times \bfe \cdot \nabla \times \bfz - \ep \kappa^2\, \bfe\cdot \bfz dx 
  = \LiF{\bfz},
  \qquad \forall \bfz \in H_{\rm c}(\curl;\Omega). 
\end{equation}
$H_{\rm loc}(\curl;\Omega)$ ($H_{\rm c}(\curl;\Omega)$) is the space of vector fields $\bfz$ which are together with the curl $\nabla \times \bfz$ locally (compactly supported) in $(L^2(\Omega))^3$. And second the Helmholtz equation in variational form in $H^1_{\rm loc}(\Omega)$
\begin{eqnarray}
  \label{eq:Helmholtz}
  \int_\Omega \nabla \fe \cdot \nabla \fz - \ep  \kappa^2 \, \fe \fz dx &=& \LiF{\fz},
  \qquad \forall \fz \in H^1_{\rm c}(\Omega)
\end{eqnarray}
as well as in mixed formulation for 
$\boldsymbol{\sigma}:=\frac{1}{i \kappa} \nabla \fe \in H_{\rm loc}(\divo;\Omega)$ and $u \in L_{\rm loc}^2(\Omega)$
\begin{subequations}
  \label{eq:Helmholtzdiv}
  \begin{eqnarray}
    \int_\Omega  \fe \nabla \cdot \boldsymbol{\tau} - i \ep \kappa \, 
    \boldsymbol{\sigma} \cdot \boldsymbol{\tau} dx &=& 0, 
    \qquad \forall \boldsymbol{\tau} \in H(\divo;\Omega),
    \\
    \int_\Omega  i \kappa \fe \fz + \nabla \cdot \boldsymbol{\sigma} \fz  dx &=& \LiF{\fz}, 
    \qquad \forall \fz \in L^2(\Omega).
  \end{eqnarray}
\end{subequations}
The constant $\kappa$ with positive real part is the wavenumber, $\LiF{\fz}$
contains any source term and $\ep$ is the local permittivity or
refraction index.

We deal in this paper with two types of problems: The scattering and the resonance problem. The scattering or source problem consists of finding a solution $\bfe$, $\fe$ or $(\boldsymbol{\sigma},u)$ to \eqref{eq:Maxwell}, \eqref{eq:Helmholtz} or \eqref{eq:Helmholtzdiv} for a given wavenumber $\kappa>0$ and a given functional $g$. In the resonance problem we are looking for eigenpairs to \eqref{eq:Maxwell}, \eqref{eq:Helmholtz} or \eqref{eq:Helmholtzdiv} with $g\equiv 0$ where $\kappa$ is now a complex resonance with positive real part and $\bfe$, $\fe$ or $(\boldsymbol{\sigma},u)$ the resonance function.

It is very common that~\eqref{eq:Maxwell} and~\eqref{eq:Helmholtz},
\eqref{eq:Helmholtzdiv} are posed on an unbounded domain $\Omega$,
such that radiation conditions as the Silver-M\"uller or Sommerfeld
radiation condition are required to make the problem well-posed. For
the numerical solution of~\eqref{eq:Maxwell} or \eqref{eq:Helmholtz}
resp. \eqref{eq:Helmholtzdiv} these radiation conditions at
``infinity'' are replaced by transparent or non-reflection boundary
conditions at some artificial boundary $\Gamma$ that separates the
bounded convex computational domain $\Oi$, where one wants to
calculate a solution, from the exterior domain $\Oe$~\cite{Givoli:04,Hagstrom2003}. Usually the
exterior domain is assumed to be homogeneous without sources. For the
presented method $\ep$ is allowed to be non-constant, but there are
restrictions to $\ep$ given at the end of
Section~\ref{sec.HardyScalar}.

The ansatz to realize transparent boundary condition presented in this
article is based on the pole condition~\cite{Schmidt:02,PC1}, which
sloppy speaking says, that a solution of the Helmholtz or time-harmonic Maxwell equation in the exterior domain is radiating, if its Laplace transform
taken along a path from the boundary $\Gamma$ to infinity is
holomorphic, i.e.\ does not have any singularities in the lower
complex half plane. The pole condition is numerically realized by the
Hardy-space infinite element method by mapping the lower half plane
onto the unit disc and approximating the function there.  This concept
has already been applied successfully to the two dimensional Helmholtz
equation (\cite{HohageNannen:09,Nannen:08,NannenSchaedle:09}) and to
time-dependent one dimensional Schr\"odinger and wave equations
(\cite{RSSZ:08}). 

The Hardy space infinite element method leads to tensor products of 
special functions in generalized radial
direction and surface boundary functions. The name infinite element
method refers to the classical infinite element methods \cite{DemIhl,DemkowiczPal:98}, where the choice of radial functions is based on
the asymptotic behavior of 
Hankel functions of the first kind. There are several other approaches to realize transparent
boundary conditions based on separable coordinates and special
functions~\cite{GroteKeller:98}, perfectly matched layer
constructions~\cite{Simon:79,Berenger:94,ChewWeedon:94,ColMonk},
boundary integral approaches~\cite{HsiaoWendland:08} and local high
order approximations~\cite{Givoli:04}.

Most of these method have the drawback to depend non-linearly on the
wavenumber $\kappa$. Hence, for resonance problems they would lead to a non-linear eigenvalue
problem.  Although it is possible to solve the resulting non-linear
eigenvalue problems, see e.g.  \cite{LenoirVulliermeHazard:92}, it is
desirable to avoid them.  Therefore complex scaling methods are
currently the standard method for solving resonance problems, see
e.g.~\cite{HeinHohageKochSchoeberl:07,KimPasciak:08}. Unfortunately
these methods give rise to spurious resonance modes and several
parameters have to be optimized for each problem. The presented Hardy
space infinite element method has the advantage to depend linearly on
$\kappa^2$ and there is only one parameter and the number of degrees of
freedom in generalized radial direction to choose.

Apart from the treatment of the exterior problem, we use the high
order finite elements of Sch\"oberl and Zaglmayr
\cite{SchoeberlZaglmayr:05,Zaglmayr:06} for the bounded interior
problem. Recent overviews of $H^1(\Omega)$, $H(\curl;\Omega)$ and $H(\divo;\Omega)$
conforming method are \cite{Monk:03,Demkowiczetal:08} and in the
context of differential forms \cite{Hiptmair:02}.
 
The paper is organized as follows: Section~\ref{sec.ExtDomain} gives
the variational formulation for the scalar problem
\eqref{eq:Helmholtz} as well as for \eqref{eq:Maxwell} and
\eqref{eq:Helmholtzdiv} using Hardy space functions. In Section
\ref{sec:BasisFunc} the local basis functions for $H^1(\Omega)$, $H(\curl;\Omega)$,
$H(\divo,\Omega)$ and $L^2(\Omega)$ conforming methods are presented. In Section
\ref{sec:Sequence} the local basis functions are collected together
and in Section \ref{sec:Numerics} numerical results show the
exponential convergence of the method.

\section{The exterior problem}
\label{sec.ExtDomain}

Let $P$ be a convex polyhedron containing $\mathbb{R}\setminus \Omega$.
The unbounded domain $\Omega$ is split into a bounded interior domain
$\Oi:=\Omega \cap P$ and an unbounded exterior domain $\Oe:=\Omega \setminus P$. 
Without loss of
generality we may assume that the surface $\Gamma=\overline{\Oi} \cap
\overline{\Oe}$ is triangulated. In the following we first summarize the
the basic ideas of the Hardy space method for exterior domains $\Oe$ in one
dimension and introduce our notation. Afterwards, the extension to
three dimensions and vector-valued functions is given.

\subsection{Exterior variational formulation in one dimension}
\label{sec.Hardy1d}
The starting point of our method is the characterization of outgoing
waves by the so-called \emph{pole condition} \cite{Schmidt:02}: 
A general solution to the homogeneous one dimensional Helmholtz equation
\begin{equation}
 -\fe''(x) - \kappa^2 \fe(x) = 0, \qquad x\geq 0,
\end{equation}
is given by $u(x)=C_1 \exp( i \kappa x) + C_2 \exp(- i \kappa x)$,
whereas $\fe$ is outgoing, iff $C_2=0$. In the original form the pole
condition states that $\fe$ is outgoing, if the Laplace transformed
function $\tilde \Fe :=\LT \fe $ with $\tilde \Fe(s) =\frac{C_1}{s- i
\kappa} + \frac{C_2}{s+ i \kappa}$ has no poles with negative
imaginary part. Hence, $\tilde \Fe$ has to be analytic in the lower
half plane and belongs to the Hardy space $H^-(\setR)$.

For the definition of Hardy spaces we refer to \cite{Duren:70} or
\cite[Sec. 2.1]{HohageNannen:09}. Roughly speaking they are
$L^2$-boundary values of functions, which are holomorphic in some
region. Here, we mainly use the Hardy space $H^-(\setR)$ of the lower
complex half plane $\{s \in \setC ~|~\Im(s)<0\}$ and the Hardy space
$H^+(S^1)$ of the complex unit disk $\{ z \in \setC ~|~|z|<1\}$. A
parameter dependent M\"obius transform
\begin{equation*}
z \mapsto s(z) = i \kappa_0 \frac{z+1}{z-1}  
\end{equation*}
with $\kappa_0 \in \setC$ and $\Re(\kappa_0)>0$ is used to construct a 
mapping $\MT: H^-(\setR) \to H^+(S^1)$, which is unitary
up to a constant factor by
\begin{equation}
\MT \tilde \Fe(z) := \tilde \Fe(s(z)) \frac{1}{z-1}.
\end{equation}

The variant of the pole condition we are using in this paper is the
following: $\fe$ is outgoing, if $\Fe:=\MT \LT \fe \in H^+(S^1)$. Our
aim is to derive a transformed variational formulation of the exterior
problem, which incorporates the radiation condition by the choice of
the Hardy space $H^+(S^1)$ for the transformed solution (see eq.\
\eqref{eq:weak1dHardy} below).  First, we need the following identity,
which follows from basic complex analysis (see \cite[Lemma
A.1]{HohageNannen:09}): For suitable functions $\fe$ and $\fz$ and
their Laplace-M\"obius transforms $\Fe:=\MT \LT \fe$, $\Fz:=\MT \LT
\fz$ in $H^+(S^1)$ we have
\begin{equation}
\label{eq:BFTrans}
 \int_0^\infty \fe(\xi) \fz(\xi) ~d \xi =  
 \BilF {\Fe}{\Fz},
\end{equation}
where
\begin{equation}
\label{eq:defFB}
 \BilF{\Fe}{\Fz} := 
 \frac{-2i\kappa_0}{2\pi} \int_{S^1}  \Fe(z) \Fz(\overline{z}) |dz|.
\end{equation}
The second ingredient  required for the derivation of the transformed 
variational formulation is the transformation of the derivative operator
$\partial_{\xi}$ in propagation direction. For this end we introduce 
the operators $\OpT_\pm:\setC \times H^+(S^1) \to H^+(S^1)$ by 
\begin{equation}
\label{eq:DefOpT}
\OpT_\pm (\fe_0,\hat \Fe) (z):= \frac{1}{2} \left( \fe_0 + (z \pm 1)\hat \Fe(z) \right)\,.
\end{equation}
It is easy to check, that $\OpT_\pm$ are injective: If
$\hat \Fe(z)=\sum_{j=0}^\infty \alpha_j z^j \in H^+(S^1)$ and if the image
$\OpT_\pm (\fe_0,\hat \Fe)$ vanishes, then
$|\fe_0|=|\alpha_0|=|\alpha_1|=\dots$, and since $(\alpha_j)_j$ is 
square summable it must be identically zero as well as $\fe_0$. For solutions $\fe$ to the homogeneous Helmholtz
equation it is shown in \cite[Lemma 5.6]{Nannen:08}, that indeed $\MT
\LT \fe$ belongs to the image $\OpT_- (\setC \times H^+(S^1))$.

If $\MT \LT \fe = \frac{1}{i \kappa_0} \OpT_- (\fe_0,\hat \Fe)$, the transformation of spatial derivative $\partial_\xi$ is
given by
\begin{equation}
\label{eq:Tp}
  \MT \LT \left( \partial_\xi \fe \right) = \OpT_+ (\fe_0,\hat \Fe)
  = i\kappa_0 \OpT_+\OpT_-^{-1} \MT \LT \left( \fe \right) ,
\end{equation}
so we set
\begin{equation}
\label{eq:derxihat}
\hat \partial_\xi := i \kappa_0 \OpT_+ \OpT_-^{-1}\,.
\end{equation}

This is not the only reason for using the operator $\OpT_-$: $\OpT_-^{-1}$ decomposes the function $\MT \LT \fe$
into the boundary value $\fe_0=\fe(0)$ and a function $\hat \Fe$ with no
contribution to the boundary value of $\fe$, which is important to ensure to continuity of the finite element basis functions over the boundary $\Gamma$ (see \ Sec.\ \ref{sec:H1element}).

Using these formulas and definitions, the weak formulation of the
one-dimensional Helmholtz equation~\eqref{eq:Helmholtz} with
e.g. $\Omega=[-1,\infty)$, $\Oe=[0,\infty)$, $\Oi=[-1,0]$ 
and $\ep(x)=1$ for $x \in \Oe$ can
be transformed to
\begin{equation}
  \label{eq:weak1dHardy}
  \int_{-1}^0 \, \fe' \fz' - \ep \,\kappa^2\, \fe \fz dx + 
  \BilF{\hat \partial_\xi \Fe }{ \hat \partial_\xi \Fz} - \kappa^2 \BilF{\Fe}{\Fz}=\LiF{\fz}.
\end{equation}
For the implementation we refer to Section~\ref{sec:H1element} or for more details 
to~\cite[Sec. 2.4]{HohageNannen:09}. 

\subsection{Segmentation of the exterior domain}
\label{sec:TrafoPrism}

In three dimensions we assume that the exterior domain $\Oe$ can be discretized by
infinite non-degenerate pyramidal frustums in such a way that on each
frustum $\ep$ is constant and that there is one distance variable
$\xi$ that allows to globally parameterize $\Oe$ by blowing up
$\Gamma$.  Such a parameterization cannot be constructed locally. We
refer to~\cite{Schmidt:02,Kettner:07,Kettner:2008,Zschiedrichetal:06}
for details on constructing such a segmentation in a general two
dimensional setting.  These assumptions on $\ep$ allow to treat for
example layered media, infinite cones or waveguides.

\begin{figure}
\centering
\resizebox{0.7\textwidth}{!}{\input{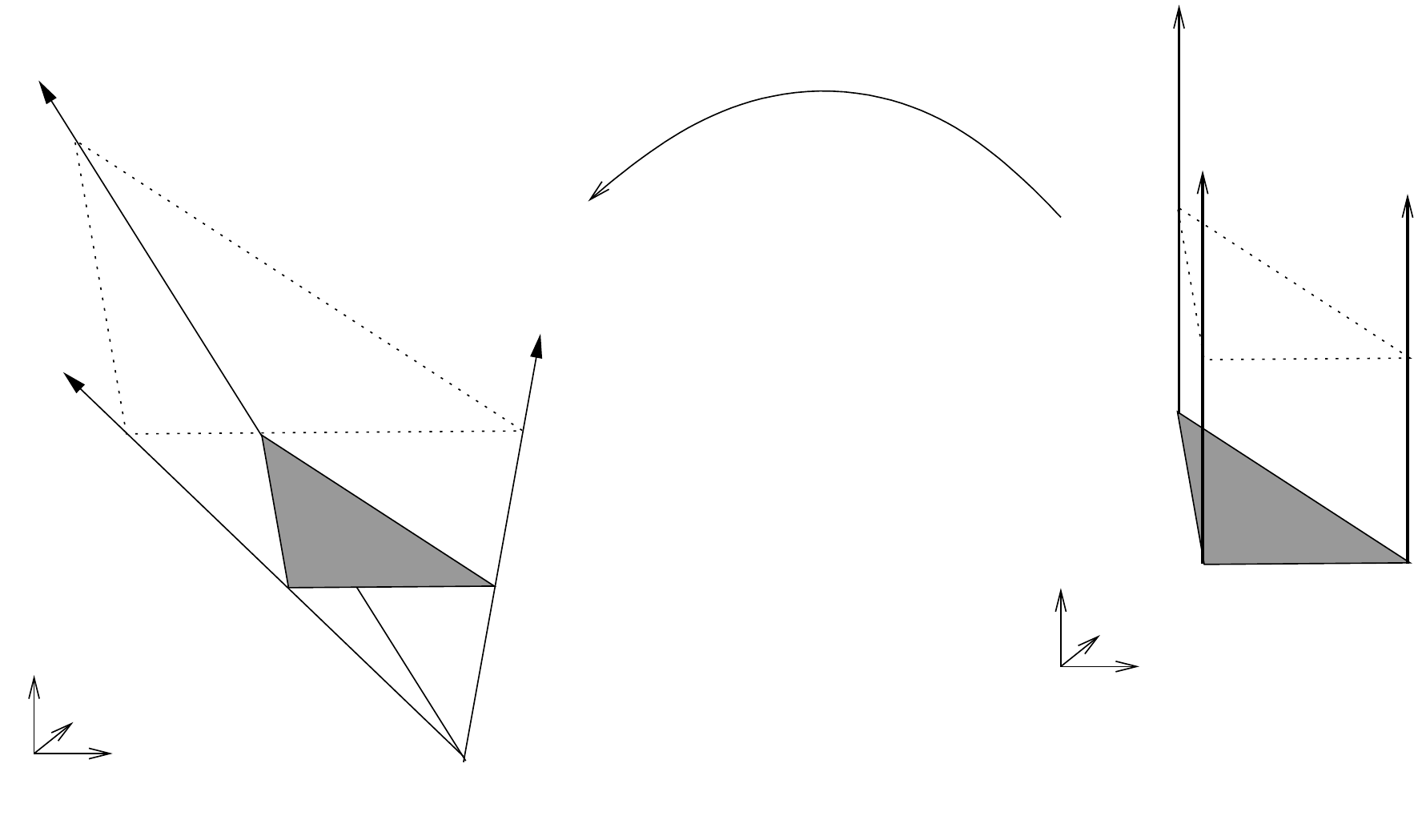_t}} 
\caption{Transformation $\BLM$ of a right prism $\hat \El$ to a pyramidal frustum $\El$}
\label{fig:segTrafo}
\end{figure}

To keep the presentation simple we consider a special exterior
discretization: 
Suppose that we have constructed a tetrahedral mesh of
$\Oi$, which induces a triangulation $\mathcal{T}$ on $\Gamma$.
Assuming now that $\ep$ is constant in $\Oe$ we can chose an arbitrary reference
point $V_0$ in $P$ and discretize $\Oe$ by infinite pyramidal
frustums $\El$ with a triangular base $T \in \mathcal{T}$ and infinite
faces formed by rays starting from $V_0$ and proceeding through the
vertices of the surface triangulation $\mathcal{T}$:
\begin{equation}
 \label{eq:infprisma}
 \El:=\{ \hat{x}+\xi(\hat{x}-V_0) \in \setR^3 ~|~ \xi \geq 0,~\hat{x} \in T\}.
\end{equation}
$\xi$ is called a (global) generalized radial variable. 
To compute integrals over a pyramidal frustum $\El$ the parameterization 
\begin{equation}
\label{eq:BLM}
  \BLM_\El(\xi,\hat{x}):= \hat{x}+\xi(\hat{x}-V_0) 
\end{equation}
is used to transform the frustum onto a right prism
$\hat{\El}:=[0,\infty)\times T$ over the surface triangle $T$, cf. 
Fig.~\ref{fig:segTrafo}. 
Using the surface gradient
\(
  \nabla_{\hat{x}}^T 
\)
of the two dimensional surface applied to the three components of
$\hat{x} \in \setR^3$, the Jacobian of the bilinear mapping
$\BLM_\El$ is given by
\begin{equation}
  J(\xi,\hat{x})=
  \begin{pmatrix} 
    \hat{x}-V_0 ~,~ (1+\xi)~ \nabla_{\hat{x}}^T \hat{x} 
  \end{pmatrix} 
  \in \setR^{3 \times 3}.
\end{equation}
For later use it is useful to factorize 
\begin{subequations}
\label{eq:Jacobian}
  \begin{eqnarray}
    J(\xi,\hat{x})&=& \hat{J}(\hat{x}) 
    \begin{pmatrix} 1 & 0 &0 \\ 0 & 1+\xi& 0\\ 0&0&1+\xi
    \end{pmatrix}
  \end{eqnarray}
into on $\hat{x}$ dependent part     
$
\hat{J}(\hat{x}) := \begin{pmatrix} 
      \hat{x}-x_0 ~,~ \nabla_{\hat{x}}^T \hat{x} 
    \end{pmatrix}
$
and a $\xi$ dependent part and note that
\begin{eqnarray}    
    J^{-T}(\xi,\hat{x})&=& \hat{J}(\hat{x}) ^{-T} 
    \begin{pmatrix} 1 & 0 &0 \\ 0 & \frac{1}{1+\xi}& 0\\ 0&0&\frac{1}{1+\xi}
    \end{pmatrix},\\
    |J (\xi,\hat{x})| &=& (1+\xi)^2 |\hat{J}(\hat{x})|.
  \end{eqnarray}
\end{subequations}

\begin{Rem}
In~\cite{HohageNannen:09} the exterior domain is the complement of a
ball of radius $a$ around the origin. Functions $u$ in the exterior
domain are written in polar coordinates with an additional scaling
factor for symmetry reasons (cf. \cite[eq. (3.2)]{HohageNannen:09}):
\begin{equation*}
	u_{\rm ext} (r,\hat{x})= (r+1)^{(d-1)/2} u( (r+1)\hat{x}).
\end{equation*}
If the reference point $V_0$ in \eqref{eq:infprisma} is set to the
origin, if $P$ would be a ball and if we  neglect the scaling factor,
then the discretization of $\Oe$ in this paper coincides with the one 
in~\cite{HohageNannen:09}.
\end{Rem}

The canonical transformations that are used to transform basis functions $\hat \fz$ and their derivatives
from a reference element $\hat \El$ to basis functions $\fz$ defined
on a local element $\El=\BLM (\hat \El)$ are summarized in the following
Lemma:

\begin{Lem}
\label{Lem:TrafoBasis}
 Let $\El \subset \setR^3$ such that $\El=\BLM (\hat \El)$ 
with Jacobian Matrix $J = J_\BLM$ and $\nabla_{\xi,\hat{x}}:= (\partial_\xi,\nabla_{\hat{x}}^{(1)},\nabla_{\hat{x}}^{(2)})^T$.
\begin{enumerate}
 \item For $\hat \fz \in H^1(\hat \El)$ let $ \fz \circ \BLM:= \hat \fz$. Then $(\nabla \fz) \circ \BLM =  J^{-T} \nabla_{\xi,\hat{x}} \hat \fz$.
\item For $\hat \bfz \in H(\curl, \hat \El)$ let $\bfz \circ \BLM:= J^{-T} \hat \bfz$. Then $(\nabla \times \bfz) \circ \BLM =  \frac{1}{|J|} J ~ \nabla_{\xi,\hat{x}} \times  \hat \bfz$.
\item For $\hat \bfz \in H(\divo,\hat \El)$ let $\bfz \circ \BLM:=  \frac{1}{|J|} J ~ \hat \bfz$. Then $(\nabla \cdot  \bfz) \circ \BLM =  \frac{1}{|J|}  \nabla_{\xi,\hat{x}} \cdot \hat \bfz$.
\end{enumerate}
\end{Lem}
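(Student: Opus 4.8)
The plan is to recall the three standard Piola transformations from finite element exterior calculus and verify that each one produces the stated differential‐operator identity, exploiting the explicit factorization of the Jacobian in \eqref{eq:Jacobian}. These are classical pullback/pushforward formulas for differential forms: the gradient (de Rham degree $0$) transforms by the covariant Piola map $J^{-T}$, the curl (degree $1$) by the covariant map together with the $\frac{1}{|J|}J$ factor, and the divergence (degree $2$) by the contravariant Piola map $\frac{1}{|J|}J$. The essence of the lemma is just the commutation of these Piola maps with the exterior derivative, specialized to three dimensions and written in terms of $\grad$, $\curl$, and $\divo$.

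First I would prove part~(1), which is the plain chain rule. Writing $\hat\fz = \fz\circ\BLM$ and differentiating with respect to the reference variables $(\xi,\hat{x})$ gives $\nabla_{\xi,\hat{x}}\hat\fz = J^{T}\,(\nabla\fz)\circ\BLM$, since $J$ is by definition the Jacobian of $\BLM$; inverting $J^{T}$ yields the claimed formula. For part~(2) I would use the defining relation $\bfz\circ\BLM = J^{-T}\hat\bfz$ and compute $\curl$ of the transformed field. The cleanest route is to verify the identity weakly: for a scalar test function $\hat\varphi$ I would check that the pairing $\int_{\El}(\curl\bfz)\cdot(\nabla\psi)\,dx$ transforms correctly under $\BLM$, using part~(1) to handle $\nabla\psi$ and the change-of-variables factor $|J|$. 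Equivalently, one substitutes $\bfz\circ\BLM = J^{-T}\hat\bfz$ directly into the definition of the curl and uses the algebraic Piola identity $\nabla\times(J^{-T}\hat\bfz)\circ\BLM = \frac{1}{|J|}J\,(\nabla_{\xi,\hat{x}}\times\hat\bfz)$, which holds for any (sufficiently smooth) diffeomorphism $\BLM$. Part~(3) is handled analogously with the contravariant map: substituting $\bfz\circ\BLM = \frac{1}{|J|}J\,\hat\bfz$ into the divergence and using the identity $\nabla\cdot\left(\frac{1}{|J|}J\,\hat\bfz\right)\circ\BLM = \frac{1}{|J|}\,\nabla_{\xi,\hat{x}}\cdot\hat\bfz$.

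The main obstacle is establishing the two algebraic Piola identities underlying parts~(2) and~(3), since these are exactly the statements that the covariant and contravariant Piola maps commute with $\curl$ and $\divo$. The honest way is a coordinate computation: expand $J^{-T}$ and $\frac{1}{|J|}J$ componentwise, apply the product rule, and observe that the terms involving derivatives of the entries of $J$ cancel by the symmetry of second partial derivatives of $\BLM$ (equivalently, $\partial_i\partial_j\BLM_k = \partial_j\partial_i\BLM_k$). This cancellation is the heart of why the Piola transforms are natural with respect to the exterior derivative; it is a standard but slightly tedious manipulation that I would either cite from the differential-forms literature (e.g.\ \cite{Hiptmair:02,Monk:03}) or relegate to a one-line appeal to the naturality of pullback, $d\,(\BLM^{*}\omega) = \BLM^{*}(d\omega)$.

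I would then close by noting that nothing in the argument uses the specific form of $\BLM_\El$ in \eqref{eq:BLM}: the lemma holds for an arbitrary orientation-preserving diffeomorphism $\BLM$ with Jacobian $J$. The explicit factorization \eqref{eq:Jacobian} is not needed for the proof of the identities themselves but is what makes the transformed bilinear forms separate into a $\xi$-part and an $\hat{x}$-part in later sections; hence I would merely record the formulas for $J^{-T}$ and $|J|$ for subsequent use rather than invoke them here.
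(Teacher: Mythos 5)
Your proposal is correct and is essentially the paper's approach: the paper proves the lemma by simply citing the standard transformation results in \cite[Sec.~3.9]{Monk:03}, and your outline (chain rule for the $H^1$ case, covariant/contravariant Piola identities commuting with $\curl$ and $\divo$ via naturality of the pullback, valid for any diffeomorphism $\BLM$) is exactly the classical argument that reference contains, which you yourself offer to cite in place of the coordinate computation.
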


\begin{proof}
See e.g. ~\cite[Sec. 3.9]{Monk:03}.
\end{proof} 

\subsection{Exterior variational formulation in $H^1(\Oe)$}
\label{sec.HardyScalar}

To extend~\eqref{eq:weak1dHardy} to scalar functions in three dimensions, we need to transform
integrals over the pyramidal frustums $\El$ to integrals over the right prism $\hat{\El}$. 
Using the definitions and notations of the last section we have 
\begin{equation}
  \int_\El \fe  \fz ~dx =  
  \int_T \int_0^\infty (\fe \circ \BLM) ~(\fz \circ \BLM) ~|J| ~ d\xi~d\hat{x}.
\end{equation}
The Laplace and M\"obius transform is applied in $\xi$-direction to the functions $\fe$ and $\fz$, i.e.\ we consider 
\begin{subequations}\label{eq:MapScalar}
\begin{eqnarray}
  \Fe(\bullet,\hat{x})&:=&\MT \LT \left(\fe \circ \BLM(\bullet,\hat{x})\right) \in H^+(S^1),\\
  \Fz(\bullet,\hat{x})&:=&\MT \LT \left(\fz \circ \BLM(\bullet,\hat{x})\right) \in H^+(S^1)
\end{eqnarray}
\end{subequations}
for $\hat{x} \in T$. In \cite{PC1,HohageNannen:09} it was proven that this generalization of the one dimensional pole condition leads to a radiation condition equivalent to other formulations of the physically relevant radiation condition. Using~\eqref{eq:BFTrans} the infinite integral $\int_0^\infty (...) d\xi$ is 
transformed into a bilinear form in the Hardy space 
$H^+(S^1)$:
\begin{equation}
 \label{eq:trafoscalar}
 \int_\El \fe  \fz dx = 
 \int_T \BilF{ \Dop \Fe (\bullet,\hat{x})}{  \Dop \Fz (\bullet,\hat{x})}  |\hat{J}(\hat{x})| d\hat{x},
\end{equation}
where the operator $\Dop$ in \eqref{eq:trafoscalar}, which is implicitly defined by
\begin{equation}
\Dop  \left(\MT \LT \fe\right) = \MT 
\LT \left((\bullet+1)\fe \right)\, ,
\end{equation}
occurs due to the fact that 
the determinant of the Jacobi matrix includes the factor $(\xi+1)^2$.
Straightforward calculations yield
\begin{equation}
 (\Dop \Fe)(z)= \frac{(z-1)^2}{2 i \kappa_0} \left(\Fe\right)'(z) + 
 \left(1+\frac{z-1}{2 i \kappa_0} \right) \left(\Fe\right)(z)\, ,
\end{equation}
so $\Dop$ has the following matrix representation with respect to 
the monomial basis $\{z^0,z^1,z^2,\dots\}$ of $H^+(S^1)$: 
\begin{equation}
 \id + \frac{1}{2 i
    \kappa_0} {\scriptstyle 
    \left(\begin{array} {ccccc} -1 & 1 & &  \\
 1 & -3 & 2   &  \\ &2 & -5 & \ddots  \\ & & \ddots & \ddots  
      \end{array}\right)}.
\end{equation}
Due to the transformation of the gradients in Lemma \ref{Lem:TrafoBasis} we have
\begin{equation}
  \label{eq:intgradfield}
  \int_\El \nabla \fe \cdot \nabla \fz ~dx = 
  \int_{T} \int _0^\infty J^{-T} \nabla_{\xi,\hat{x}} \fe \circ \BLM \cdot 
  J^{-T} \nabla_{\xi,\hat{x}} \fz \circ \BLM~|J| d\xi d\hat{x}.
\end{equation}
%
We define the $\hat{x}$ dependent matrix $G=(g_{jk})_{j,k=1}^3:=|\hat{J}|\hat{J}^{-1}\hat{J}^{-T}$ and 
use \eqref{eq:Jacobian} to calculate
\begin{equation}
 (|J| J^{-1} J^{-T}) (\xi,\hat{x}) =\left(\!\begin{array}{ccc}
g_{11}(\hat{x})(1+\xi)^2 &g_{12}(\hat{x})(1+\xi)&g_{13}(\hat{x})(1+\xi)\\
g_{21}(\hat{x})(1+\xi) &g_{22}(\hat{x})&g_{23}(\hat{x})\\
g_{31}(\hat{x})(1+\xi) &g_{32}(\hat{x})&g_{33}(\hat{x})\\
\end{array}\!\right).
\end{equation}
Thus \eqref{eq:intgradfield} becomes
\begin{equation}
\label{eq:stiffH1}
\int_\El \nabla \fe \cdot \nabla \fz ~dx = A_{G,T} \left(\vect{\left(\Dop \hat  \partial_{\xi} \otimes \id\right) \Fe}{\left(\id \otimes \nabla_{\hat x}\right)\Fe},\vect{\left(\Dop \hat \partial_{\xi} \otimes \id\right) \Fz}{\left(\id \otimes \nabla_{\hat x}\right) \Fz} \right)
\end{equation}
with
\begin{equation}
\label{eq:BilFmdim}
 A_{G,T}  \left( (\Fe_1,...,\Fe_r)^T,(\Fz_1,...,\Fz_r)^T \right)\!:=\!\int_T\!\sum_{j,k=1}^{r} g_{jk}(\hat{x}) \BilF{ \Fe_j(\bullet,\hat{x})}{ \Fz_k (\bullet,\hat{x})}d\hat{x}.\!
\end{equation}
Since $\Fe \in H^+(S^1) \otimes L^2(\Gamma)$, we use a tensor product notation to combine the operators $\Dop$, $\hat \partial_\xi$ acting on $H^+(S^1)$ and $\nabla_{\hat x}$ acting on $L^2(\Gamma)$ to operators on $H^+(S^1) \otimes L^2(\Gamma)$ (see e.g. \cite{Murphy:90}). $\id$ denotes the identity operator. Note, that the matrix $G$ in \eqref{eq:BilFmdim} can be replaced with $|\hat{J}|$ in order to get a short notation for the right hand side of \eqref{eq:trafoscalar} as well.

Similar to \eqref{eq:weak1dHardy} a variational formulation of
\eqref{eq:Helmholtz} using the Hardy space $H^+(S^1)$ consist of the
integral over the bounded interior domain $\Oi$ and the integral over
the unbounded exterior domain $\Oe$, which is given by the sum over
all surface triangles of the integrals \eqref{eq:trafoscalar} and
\eqref{eq:stiffH1}: %
\begin{equation}
\begin{aligned}
 \LiF{\fz}& = \int_{\Oi} \nabla \fe \cdot \nabla \fz - \ep  \kappa^2 \, \fe \fz dx\\
 +& \sum_{T \in \mathcal{T}} \left( A_{G,T} \left(\vect{\left(\Dop \hat  \partial_{\xi} \otimes \id\right) \Fe}{\left(\id \otimes \nabla_{\hat x}\right)\Fe},\vect{\left(\Dop \hat \partial_{\xi} \otimes \id\right) \Fz}{\left(\id \otimes \nabla_{\hat x}\right) \Fz} \right) - A_{\ep |\hat{J}|,T} (\Fe,\Fz)\right).
\end{aligned}
\end{equation}
The coefficient function $\ep$ in can be incorporated into the exterior integrals, if it
depends for each pyramidal frustum only on the surface variable $\hat{x}$. Numerically, the integrals over the surface triangle $T$ will be approximated using a quadrature formula, while the bilinear-form $a$ can be computed analytically as presented in \cite{HohageNannen:09}.

\subsection{Exterior variational formulation in $H(\curl;\Oe)$}
\label{sec:HSMVF}
Since a solution $\bfe$ to \eqref{eq:Maxwell} satisfies the vector valued Helmholtz equation and since the Silver-M¸\"uller radiation condition is equivalent to the Sommerfeld radiation condition for the Cartesian components of $\bfe$ (see \cite{Colton}), we can extend the pole condition to exterior Maxwell problems using it for each component of $\bfe=(\fe_1,\fe_2,\fe_3)^T$. Hence, with the transformation of $H(\curl;\Omega)$ functions of Lemma~\ref{Lem:TrafoBasis} we define
\begin{equation}
  \label{eq:MapHcurl}
  \bFe(\bullet,\hat{x}):=\hat{J}^{T}(\hat{x}) \vector{\MT\LT
    \fe_1\circ \BLM(\bullet,\hat{x}) }{\Dop \MT \LT \fe_2\circ
    \BLM(\bullet,\hat{x})}{\Dop \MT \LT \fe_3\circ
    \BLM(\bullet,\hat{x})},\qquad \hat{x} \in T.
\end{equation}
The transformation of the mass integral in $H(\curl,\El)$ has already been used for gradient fields
and is given in analogy to \eqref{eq:stiffH1} by
\begin{equation}
\label{eq:massHcurl}
\int_\El \bfe \cdot \bfz dx =A_{G,T} \left( ((\Dop \otimes \id) \Fe_1,\Fe_2,\Fe_3)^T,((\Dop \otimes \id) \Fz_1,\Fz_2,\Fz_3)^T \right).
\end{equation}
Due to the transformation of the $\curl$ operator in  Lemma~\ref{Lem:TrafoBasis} we define $C=(c_{jk})_{j,k=1}^3:=|\hat{J}|^{-1} \hat{J}^T\hat{J}$ with 
\begin{equation}
  \label{eq:DefC}
  \frac{1}{|J|} J^T J  (\xi,\hat{x}) =\left(\!\begin{array}{ccc}
      \frac{c_{11}(\hat{x})}{(1+\xi)^2} &\frac{c_{12}(\hat{x})}{1+\xi}&\frac{c_{13}(\hat{x})}{1+\xi}\\
      \frac{c_{21}(\hat{x})}{1+\xi} &c_{22}(\hat{x})&c_{23}(\hat{x})\\
      \frac{c_{31}(\hat{x})}{1+\xi} &c_{32}(\hat{x})&c_{33}(\hat{x})\\
    \end{array}\!\right).
\end{equation}
The factor $(\xi+1)^{-1}$ in~\eqref{eq:DefC} leads to the integral
operator $\Iop:=\Dop^{-1}$ in the Hardy space formulation of the $\curl \curl$ integral:
\begin{equation}
\begin{aligned}
\label{eq:stiffHcurl}
&\int_\El \nabla \times \bfe \cdot \nabla \times \bfz ~dx =\\
& A_{C,T} \left(\vector{\left(\Iop \otimes \nabla_{\hat{x}}^{(1)}\right) \Fe_3 - \left(\Iop \otimes \nabla_{\hat{x}}^{(2)}\right) \Fe_2}{\left(\id \otimes \nabla_{\hat{x}}^{(2)}\right) \Fe_1 - \left(\hat \partial_\xi \otimes \id\right) \Fe_3}{\left(\hat \partial_\xi \otimes \id\right) \Fe_2-\left(\id \otimes \nabla_{\hat{x}}^{(1)}\right) \Fe_1} ,  \vector{\left(\Iop \otimes \nabla_{\hat{x}}^{(1)}\right) \Fz_3 - \left(\Iop \otimes \nabla_{\hat{x}}^{(2)}\right) \Fz_2}{\left(\id \otimes \nabla_{\hat{x}}^{(2)}\right) \Fz_1 - \left(\hat \partial_\xi \otimes \id\right) \Fz_3}{\left(\hat \partial_\xi \otimes \id\right) \Fz_2-\left(\id \otimes \nabla_{\hat{x}}^{(1)}\right) \Fz_1}\right).
\end{aligned}
\end{equation}
%
%
As in the
previous section, the integrals in~\eqref{eq:massHcurl} 
and~\eqref{eq:stiffHcurl} form the exterior part of~\eqref{eq:Maxwell}:
\begin{equation}
\begin{aligned}
&\LiF{\bfz} = \int_{\Oi}  \nabla \times \bfe \cdot \nabla \times \bfz - \ep \kappa^2\, \bfe\cdot \bfz dx + \sum_{T \in \mathcal{T}} \Big(\\
&A_{C,T} \left(\vector{\left(\Iop \otimes \nabla_{\hat{x}}^{(1)}\right) \Fe_3 - \left(\Iop \otimes \nabla_{\hat{x}}^{(2)}\right) \Fe_2}{\left(\id \otimes \nabla_{\hat{x}}^{(2)}\right) \Fe_1 - \left(\hat \partial_\xi \otimes \id\right) \Fe_3}{\left(\hat \partial_\xi \otimes \id\right) \Fe_2-\left(\id \otimes \nabla_{\hat{x}}^{(1)}\right) \Fe_1} ,  \vector{\left(\Iop \otimes \nabla_{\hat{x}}^{(1)}\right) \Fz_3 - \left(\Iop \otimes \nabla_{\hat{x}}^{(2)}\right) \Fz_2}{\left(\id \otimes \nabla_{\hat{x}}^{(2)}\right) \Fz_1 - \left(\hat \partial_\xi \otimes \id\right) \Fz_3}{\left(\hat \partial_\xi \otimes \id\right) \Fz_2-\left(\id \otimes \nabla_{\hat{x}}^{(1)}\right) \Fz_1}\right)\\
&- A_{\ep G,T} \left( ((\Dop \otimes \id) \Fe_1,\Fe_2,\Fe_3)^T,((\Dop \otimes \id) \Fz_1,\Fz_2,\Fz_3)^T \right) \Big).
\end{aligned}
\end{equation}

\subsection{Exterior variational formulation in $H(\divo;\Oe)$}
\label{sec:HSMHdiv}
The construction follows analogously to the $H(\curl;\Omega)$ functions of
the previous section using the transformation of $H(\divo;\Omega)$ function
of Lemma~\ref{Lem:TrafoBasis}:
\begin{equation}
  \label{eq:MapHdiv}
  \bFe(\bullet,\hat{x}):=|\hat{J}(\hat{x})|\hat{J}^{-1}(\hat{x})  
  \vector{ \Dop \Dop \MT\LT  ~\fe_1\circ \BLM(\bullet,\hat{x}) }{
    \Dop \MT \LT ~\fe_2\circ \BLM(\bullet,\hat{x})}{
    \Dop \MT \LT ~\fe_3\circ \BLM(\bullet,\hat{x})},\qquad \hat{x} \in T,
\end{equation}
for $\bfe=(\fe_1,\fe_2,\fe_3)^T \in H(\divo,\El)$. Hence, the mass
integral in $H(\divo,\El)$ is the integral in~\eqref{eq:stiffHcurl}. The stiffness integral
follows by straightforward calculations using the transformation of
the divergence operator.

\section{Local exact sequence of tensor product spaces}
\label{sec:BasisFunc}
For a bounded domain $\Omega \subset \setR^3$, which is diffeomorphic to
the unit ball the sequence 
\begin{equation}
  \label{eq:exsec}
  \setC \ \RA{j} \ H^1(\Omega)\ \RA{\nabla} \ H(\curl,\Omega)  \ 
  \RA{\nabla \times} \ H(\divo,\Omega) \ \RA{\nabla \cdot} \ L^2(\Omega) 
  \ \RA{\Null} \ \{0\}
\end{equation}
is exact, i.e.\ $j \setC = \ker(\nabla)$ (where $j$ maps $a\in\setC$
to the constant function $\Omega\to \setC$, $x\mapsto a$), 
$\nabla H^1(\Omega) = \ker(\nabla \times)$,
$\nabla \times H(\curl,\Omega) = \ker (\nabla \cdot)$ and
$\nabla \cdot H(\divo,\Omega) = L^2(\Omega)$.
On the two dimensional surface $\Gamma$ the sequence~\eqref{eq:exsec} 
decomposes into two exact sequences
\begin{subequations}
  \label{eq:exsec2d}
  \begin{eqnarray}
    &&\setC \ \RA{j_{\hat{x}}} \ H^1(\Omega) \ \RA{\nabla_{\hat{x}}} \ 
    H(\scurl,\Omega) \ \RA{\nabla_{\hat{x}}^\perp \cdot} \ L^2(\Omega) 
    \ \RA{\Null} \ \{0\}, \label{eq:exsec2da}
    \\
    &&\setC \ \RA{j_{\hat{x}}} \ H^1(\Omega) \ \RA{\nabla_{\hat{x}}^\perp} \
    H(\divo,\Omega) \ \RA{\nabla_{\hat{x}}\cdot} \ L^2(\Omega) \ \RA{\Null} 
    \ \{0\}, \label{eq:exsec2db}
  \end{eqnarray}
\end{subequations}
with the rotated surface gradient operator
$\nabla_{\hat{x}}^\perp:=(-\nabla_{\hat{x}}^{(2)},\nabla_{\hat{x}}^{(1)})^T$
and the scalar curl operator $\scurl_{\hat{x}}=\nabla_{\hat{x}}^\perp
\cdot\,$. In the following $\bullet^\perp$ always denotes the rotation of a 2d vector, vector space or operator with values in $\setC^2$.

For Maxwell eigenvalue problems, due to exact sequence property there
exists an eigenvalue $0$ with an infinite dimensional eigenspace consisting
of all gradients of $H^1$ functions. If
the finite element method is not constructed carefully, the numerical
results will be polluted by artificial eigenvalues coming from this
eigenspace (see e.g.~\cite[Sec. 6.2]{Boffietal:99}). Hence, 
following~\cite[Chapt. 5]{Monk:03} or~\cite{Zaglmayr:06} we try to carry over
the exact sequence property of the continuous spaces to the discrete
spaces. Until now we cannot prove a discrete de Rham diagram with commuting
projection and differential operators for the discrete spaces we are
going to construct, but at least they build an exact sequence locally.

In the interior domain we use a discretization with tetrahedral elements 
and the high order elements of~\cite[Chapt. 5.2.6]{Zaglmayr:06} with the 
local exact sequence
\begin{equation}
  \setC \ \RA{j} \ P^p\ \RA{\nabla} \ \left(P^{p-1}\right)^3  \ 
  \RA{\nabla \times} \ \left(P^{p-2}\right)^3 \ \RA{\nabla \cdot} \ 
  \left(P^{p-3}\right)^3  \ \RA{\Null} \ \{0\}.
\end{equation}
In the following we focus on the elements for the exterior domain and the 
coupling on the artificial boundary.

There, the pyramidal frustums $\El$ are mapped into right prisms $\hat
\El$ (see Fig.~\ref{fig:segTrafo}). Therefore the local elements are
build by tensor products of one-dimensional infinite elements for $\xi
\in [0,\infty)$ 
and triangular elements on the 2d surface element $T$ consisting of subspaces $W_T
\subset H^1(T)$, $V_T \subset H(\scurl,T)$ and $X_T \subset L^2(T)$. 
The construction of the local
sequence is motivated by a modified tensor product of two chain complexes
\cite[Sec. 5.7]{HiltonWylie:60}:
Combining the two surface sequences \eqref{eq:exsec2d} we use the cochain complex array 
\begin{equation}
\label{eq:ChainDia}
\begin{CD}
W_\xi \otimes W_T
@>\id \otimes \nabla_{\hat{x}}>> 
W_\xi \otimes V_T
@>\id \otimes  \nabla_{\hat{x}}^\perp \cdot >>
W_\xi \otimes X_T \\ 
@V{\partial_\xi\otimes\id}VV  
@V{\partial_\xi\otimes \id^\perp}VV 
@V{\partial_\xi\otimes\id}VV \\ 
W_\xi' \otimes W_T
@>\id\otimes \nabla_{\hat{x}}^\perp>>  
 W_\xi' \otimes V_T^\perp
@>\id\otimes \nabla_{\hat{x}}\cdot>> 
W_\xi' \otimes X_T.
\end{CD}
\end{equation}
The tensor product chain complex is given by the sums over the diagonals:
\begin{equation}
   \label{eq:TensorChain}
\begin{CD}
W_\xi \otimes W_T
@>{\smat{\partial_\xi \otimes \id \\ \id \otimes \nabla_{\hat{x}}}}>>
\vect{W_\xi' \otimes W_T}{W_\xi \otimes V_T}
@>{\smat{0 & \id \otimes  \nabla_{\hat{x}}^\perp \cdot \\ - \id \otimes \nabla_{\hat{x}}^\perp & \partial_\xi \otimes \id^\perp}}>> \cdots \\
... @>>> \vect{W_\xi \otimes X_T}{W_\xi' \otimes V_T^\perp}
@>{\smat{\partial_\xi \otimes \id & \id \otimes \nabla_{\hat{x}} \cdot}}>>
W_\xi' \otimes X_T.
\end{CD}
\end{equation}

If $\Gamma \subset \setR^2$, the operators in ~\eqref{eq:TensorChain} are the standard gradient, $\curl$ and divergence operator. Hence, ~\eqref{eq:TensorChain} discretizes in some sense
\begin{equation}
 H^1(\El)\ \RA{\nabla} \ H(\curl,\El)  \ \RA{\nabla \times} \ H(\divo,\El) \ 
 \RA{\nabla \cdot} \ L^2(\El).
\end{equation}
The term ,,in some sense'' is used, because the finite element basis
function will not belong to $H^1(\El)$, $H(\curl,\El)$, $
H(\divo,\El)$ and $L^2(\El)$, since the radial parts of them belong to
the Hardy space $H^+(S^1)$. The definition of the correct function
spaces would be very complicated. In~\cite[Sec. 3.2]{HohageNannen:09}
the function space is given for scalar functions with spherical
artificial boundary. Here, we confine ourselves to the discrete
spaces, but we indicate in the following with the notation
\begin{equation}
 \hat \nabla := \left(\hat \partial_\xi,\nabla_{\hat{x}}^{(1)},\nabla_{\hat{x}}^{(2)}\right)^T,
\end{equation}
that $\hat \partial_\xi$ is an operator of a subset of $H^+(S^1)$ into $H^+(S^1)$.

\subsection{$H^1$-element}
\label{sec:H1element}
We briefly discuss the $H^1$-elements proposed in~\cite{HohageNannen:09,NannenSchaedle:09}. 
For each segment $\El$ we use the local tensor product space
\begin{equation}
\label{eq:H1space}
W_{\hat{\El}}= W_\xi \otimes W_T,
\end{equation}
whereas $W_T$ is the surface element of the surface triangle $T$ of
the $H^1$ tetrahedral volume element and $W_\xi$ a space in radial
direction. $W_T$ equals the usual polynomial space $P^p(T)$ for the
two-dimensional triangular element and  therefore it has the dimension
$\frac{1}{2}(p+2)(p+1)$.

In radial direction we have to discretize the Hardy space $H^+(S^1)$, where a $L^2$-orthogonal basis is given by monomials. Using the first
$N+1$ monomials $\Pi_N:={\rm \bf span\/} \{z^0,...,z^N\}$ and the operator $\OpT_-$ of \eqref{eq:DefOpT}, we define the discrete space $W_\xi:=\frac{1}{i \kappa_0} \OpT_- (\setC
\times \Pi_N)$ with the basis functions
\begin{equation}
  \Psi_{-1}:=\frac{1}{i \kappa_0} \OpT_- (1,0) ~\text{and}~\Psi_j:=
  \frac{1}{i \kappa_0} \OpT_- (0,(\bullet)^j),~j=0,...,N.
\end{equation}
Since $\left(\LT^{-1} \MTinv \Psi_{j} \right)(0)=0$ for $j=0,...,N$ and $\left(\LT^{-1} \MTinv \Psi_{-1} \right)(0)=1$, the basis function $\Psi_{-1}$ is used to couple the Hardy space infinite elements of the exterior domain $\Oe$ to the finite elements of the interior domain $\Oi$.

\begin{figure}
\centering
\resizebox{0.6\textwidth}{!}{\input{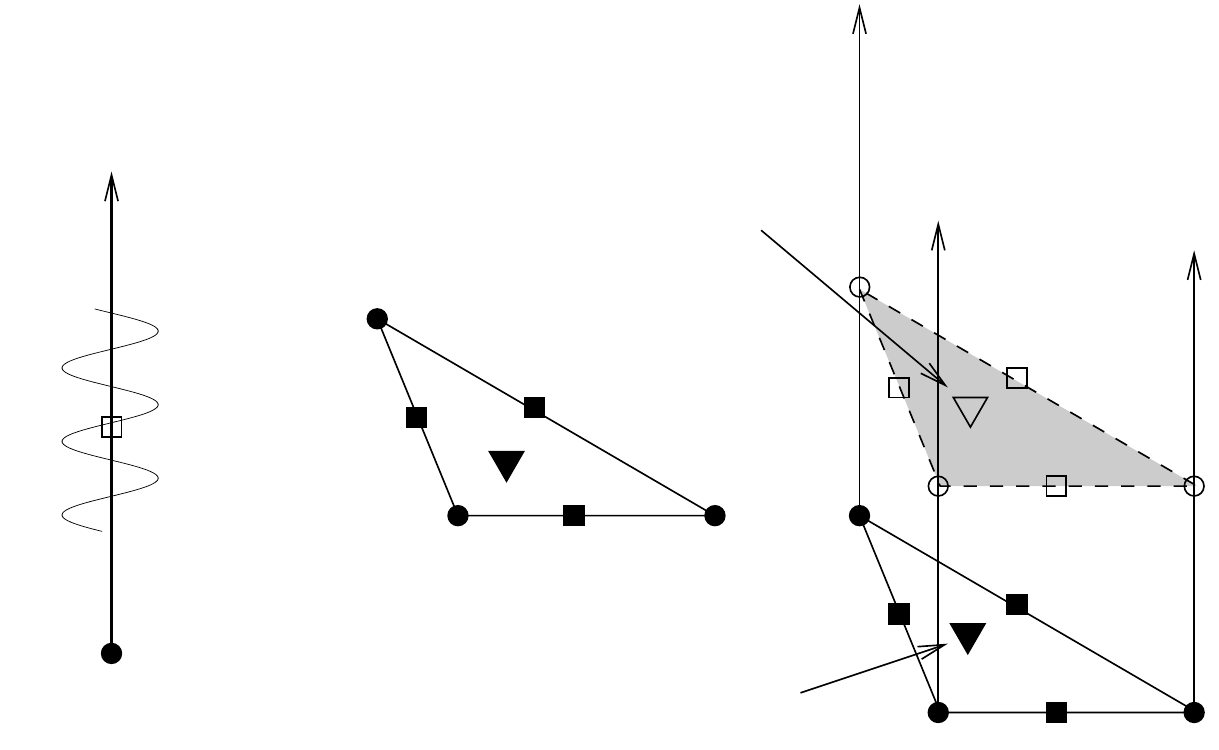_t}}
\caption{basis functions in $W_{\hat{\El}}= W_\xi \otimes W_T$}
\label{fig:basisH1}
\end{figure}

Let us collect all basis functions for the prism $\hat{\El}$ and assign them to a vertex $V_i$ ($i=1,2,3$)
of the surface triangle $T$, an edge $E_{ij}$ between vertex $V_i$ and
$V_j$, the surface triangle $T$, an infinite ray $R_i$ corresponding
to a vertex $V_i$, an infinite face $F_{ij}$ corresponding to the edge
$E_{ij}$ or the prism $\hat{\El}$ itself. Following~\cite[Chapt. 5.2.3]{Zaglmayr:06} 
a basis function $w$ of $W_{T}$ can be a
\begin{enumerate} \itemsep0ex
 \item vertex basis function $w^{V_i}$,
 \item edge basis function $w^{E_{ij}}_l$, $l=1,...,p-1$, or a 
 \item element basis function $w^{T}_l$, $l=1,...,\frac{1}{2}(p-2)(p-1)$.
\end{enumerate}
Hence, the basis functions of $W_{\hat{\El}}=W_\xi \otimes W_{T}$ sketched in
Fig. \ref{fig:basisH1} are
\begin{enumerate}\itemsep0ex
 \item vertex functions $W^{V_i}:=\Psi_{-1} \otimes w^{V_i}$,
 \item edge functions $W^{E_{ij}}_l:=\Psi_{-1} \otimes w^{E_{ij}}_l$, $l=1,...,p-1$, 
 \item surface functions $W^{T}_l:=\Psi_{-1} \otimes w^{T}_l$, $l=1,...,\frac{1}{2}(p-2)(p-1)$,
 \item ray functions $W_k^{R_i}:=\Psi_k \otimes w^{V_i}$, $k=0,...,N$,
 \item infinite face functions $W_{k,l}^{F_{ij}}:=\Psi_k \otimes w^{E_{ij}}_l$, $k=0,...,N$, $l=1,...,p-1$ and
 \item segment functions $W_{k,l}^\El:=\Psi_k \otimes w^{T}_l$, $k=0,...,N$, $l=1,...,\frac{1}{2}(p-2)(p-1)$.
\end{enumerate}
The degrees of freedom for the segment basis functions are interior
degrees of freedom, while the others have to coincide between
neighboring infinite segments and on the surface with the tetrahedron
in $\Oi$. In total, there are
\begin{equation}
\dim W_{\hat{\El}}= \frac{1}{2}(p+2)(p+1) (N+2)
\end{equation}
degrees of freedom. 

\subsection{$H(\curl)$-element}
Due to the sequence~\eqref{eq:TensorChain} we 
use for the approximation of elements in $H(\curl)$ the space
\begin{equation}
 V_{\hat{\El}} = \vect{W_\xi' \otimes W_T}{W_\xi \otimes V_T},
\end{equation}
with the surface finite element space $V_T$ of the $H(\curl)$ volume element of
order $p-1$. As in the previous case $V_T$ is the 
vectorial triangular element space with two components, 
i.e. $V_T=\left(P^{p-1}\right)^2$ with $ \dim V_T
= (p+1)p$. For $W_\xi'$ ~\eqref{eq:derxihat} leads to the basis functions
\begin{equation}
 \psi_{-1}:=\OpT_+ (1,0) ~\text{and}~\psi_j:=\OpT_+ (0,(\bullet)^j),~j=0,...,N.
\end{equation}

\begin{figure}
\centering
\subfigure[$W_\xi \otimes V_T$]{\resizebox{0.45\textwidth}{!}{\input{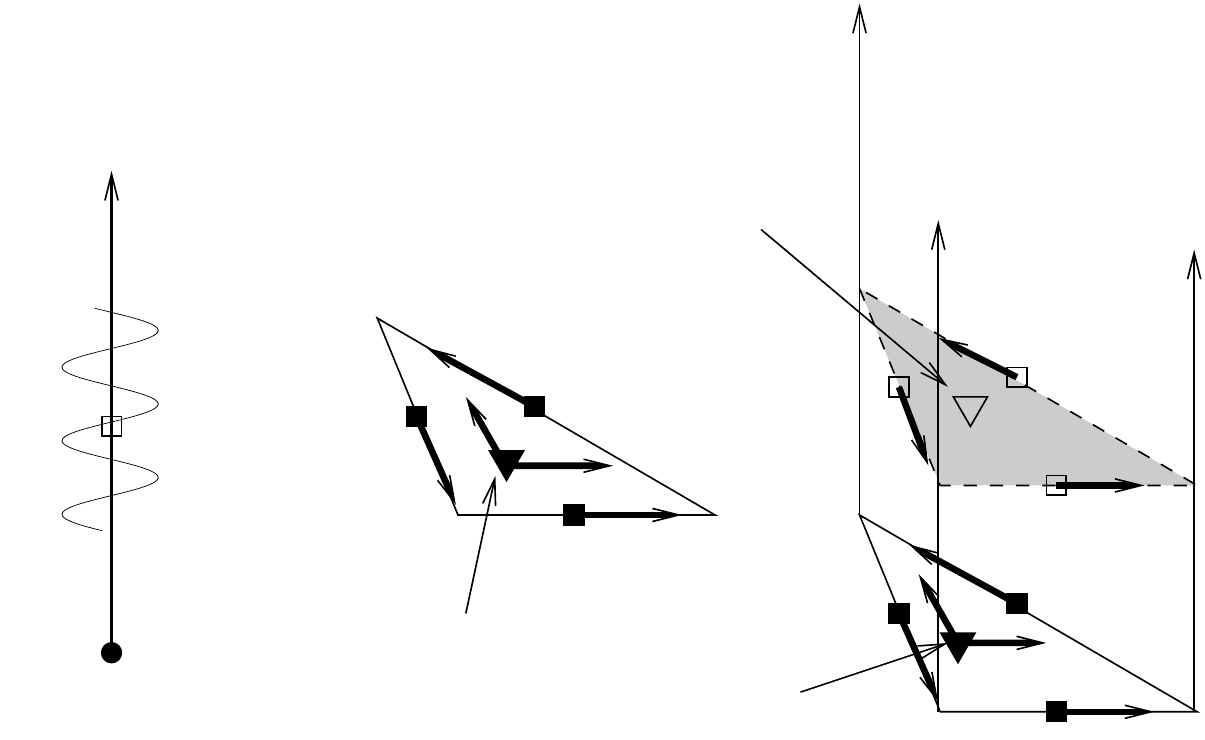_t}}}\hfill
\subfigure[$W_\xi' \otimes W_T$]{\resizebox{0.45\textwidth}{!}{\input{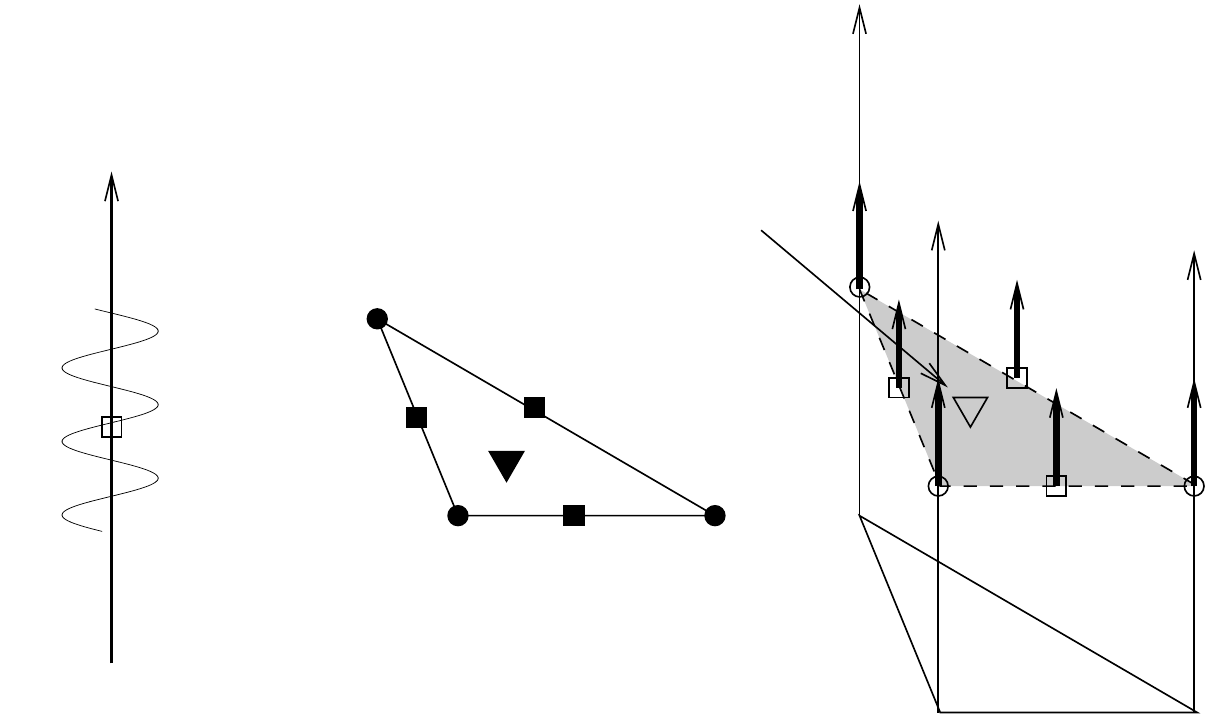_t}}}
\caption{basis functions of $V_{\hat{\El}}$}
\label{fig:basisHcurl}
\end{figure}

If $\mathbf{v}^{E_{ij}}_l \in V_T$ denotes the edge basis functions of the
$H(\curl)$ surface triangular element and $\mathbf{v}^{T}_l$ the surface
triangle basis functions, the basis functions in $V_{\hat{\El}}$ are 
\begin{enumerate} \itemsep0ex
  \item edge functions $\mathbf{V}_l^{E_{ij}}:=\vect{0}{\Psi_{-1} \otimes \mathbf{v}^{E_{ij}}_l}$, $l=1,...,p$,
  \item surface functions $\mathbf{V}_l^T:=\vect{0}{\Psi_{-1} \otimes \mathbf{v}^{T}_l}$, $l=1,...,(p-2)p$,
  \item ray functions $\mathbf{V}_k^{R_i}:=\vect{\psi_k \otimes w^{V_i}}{\boldsymbol{0}}$, $k=-1,...,N$,
  \item two types of infinite face functions 
\begin{enumerate}
 \item $\mathbf{V}_{k,l}^{F_{ij},1}:=\vect{0}{\Psi_{k} \otimes \mathbf{v}^{E_{ij}}_l}$, $k=0,...,N$, $l=1,...,p$,
 \item $\mathbf{V}_{k,l}^{F_{ij},2}:=\vect{\psi_k \otimes w^{E_{ij}}_l}{\boldsymbol{0}}$, $k=-1,...,N$, $l=1,...,p-1$
\end{enumerate}
  \item and two types of segment functions
\begin{enumerate}\itemsep0ex
 \item $\mathbf{V}_{k,l}^{\El,1}:=\vect{0}{\Psi_k \otimes \mathbf{v}^{T}_l}$, $k=0,...,N$, $l=1,...,(p-2)p$ and
 \item $\mathbf{V}_{k,l}^{\El,2}:=\vect{\psi_k \otimes w^{T}_l}{\boldsymbol{0}}$. $k=-1,...,N$, $l=1,...,\frac{1}{2}(p-2)(p-1)$.
\end{enumerate}
\end{enumerate}

See Fig.~\ref{fig:basisHcurl} for a scheme of the curl-conforming basis functions. 
Note, that only the tangential directions indicated by the arrows are continuous over the boundaries. The
segment basis functions have vanishing tangential components on the
edges, rays, faces and the surface and are therefore interior basis
functions. The tangential components of the other functions vanish on
the boundary parts to which they are not assigned. For example the
tangential component $\psi_k \otimes w^{V_1}$ of a ray function on
$R_1$ vanishes due to the vertex function $w^{V_1}$ on the rays $R_2$
and $R_3$ as well as on the infinite face $F_{23}$. It does not vanish
on the surface, but there it is the normal component, which does not
need to be continuous for $H(\curl)$ functions. The tangential
component on the surface is the second component of the vector and
therefore zero on the whole segment.

If we link together the degrees of freedom for neighboring elements, we get 
a $H(\curl)$-conforming method with 
\begin{equation}
\label{eq:DimHcurl}
 \dim V_{\hat{\El}} = (N+2) \left( \frac{1}{2}(p+2)(p+1) ~+~(p+1)p\right).
\end{equation}
degrees of freedom for each segment.

\begin{Rem}
It is not essential to use for $W_\xi'$ the basis functions
$\psi_k$. The special choice of the basis functions $\Psi_k$ was necessary to divide into
boundary degrees of freedom and interior ones, which is useful in
order to guarantee continuity of the global basis functions at the
segment boundaries and hence to get a conforming method. But as we
have seen for the ray function on $R_1$, there is no coupling between
the ray and face basis functions and the basis functions in the
tetrahedron. 

Moreover, all of the functions $\LT^{-1} \MTinv \psi_k$
have non-vanishing boundary values. Since $\OpT_+ (\setC,\Pi_N)^T=\Pi_{N+1}$
we could also directly take $\{z^0,...,z^{N+1}\}$ as basis
functions. The only reason for us using $\psi_k$ is, that in this way
the basis functions in $W_\xi'$ are exactly the derivatives of the
basis functions in $W_\xi$.
\end{Rem}

\subsection{$H(\divo)$-element}
\label{sec:Hdivelem}
Starting from $V_{\hat{\El}}$, we calculate
\begin{equation}
 \hat \nabla \times V_{\hat{\El}} = 
 \left(\begin{array}{cc} 0 & \id \otimes \nabla_{\hat{x}}^\perp \cdot \\ -\id \otimes \nabla_{\hat{x}}^\perp & \hat \partial_\xi \otimes \id^\perp  \end{array} \right) \vect{W_\xi' \otimes W_T}{W_\xi \otimes V_T}.
\end{equation}
The space $H(\divo_{\hat{x}})$ is due 
to~\eqref{eq:exsec2d} a rotated $H(\scurl_{\hat{x}})$
and $\nabla_{\hat{x}}^\perp W_T \subset V_T^\perp$. Moreover,
the normal components of the basis functions on each face of a
$H(\divo)$ tetrahedral element include the scalar
$\scurl_{\hat{x}}$-fields of $V_T$. Therefore we chose
\begin{equation}
  Q_{\hat{\El}} = \vect{W_\xi \otimes Q_T} {W_\xi' \otimes V_T^\perp},
\end{equation}
where $Q_T=P^{p-2}$ is the finite element space for the normal component 
on the surface of a $H(\divo)$ volume element of order $(p-2)$ 
and takes the place of $X_T$ in the third term of~\eqref{eq:TensorChain}.

Let $q^{T}$ be a basis functions in $Q_T$. Then, the basis functions of 
$Q_{\hat{\El}}$ consist of
\begin{enumerate} \itemsep0ex
  \item surface functions 
    $\mathbf{Q}_l^T:=\vect{\Psi_{-1} \otimes q^{T}_l}{\boldsymbol{0}}$, 
    $l=1,...,\frac{1}{2}p(p-1)$,
  \item face functions 
    $\mathbf{Q}_{k,l}^{F_{ij}}:=\vect{0}{\psi_k \otimes \left(\mathbf{v}^{E_{ij}}_l\right)^\perp}$, 
    $k=-1,...,N$, $l=1,...,p$
  \item and two types of segment functions
\begin{enumerate}\itemsep0ex
 \item $\mathbf{Q}^{\El,1}_{k,l}:=\vect{\Psi_k \otimes q^{T}_l}{0}$, 
   $k=0,...,N$, $l=1,...,\frac{1}{2}p(p-1)$ and
 \item $\mathbf{Q}^{\El,1}_{k,l}:=\vect{0}{\psi_k \otimes \left( \mathbf{v}^{T}_l \right)^\perp}$, 
   $k=-1,...,N$, $l=1,...,(p-2)p$.
\end{enumerate}
\end{enumerate}
The basis functions of $W_\xi \otimes Q_T$ ensure the continuity of the normal component on the
boundary surface and those of $W_\xi' \otimes V_T^\perp$ the continuity of the normal component on the infinite faces.

For each segment there are
\begin{equation}
\label{eq:DimHdiv}
 \dim Q_{\hat{\El}} = (N+2) \left(\frac{1}{2} p (p-1) + (p+1)p \right).
\end{equation}
degrees of freedom.

\subsection{$L^2$-element}
\label{sec:Ltelem}
The $L^2$-element has no coupling on the boundaries. Nevertheless, we
compute the divergence of $Q_{\hat{\El}}$:
\begin{equation}
  \label{eq:divQS}
 \hat \nabla \cdot Q_{\hat{\El}} = \vect{\hat \partial_\xi \otimes \id}{ \id \otimes \nabla_{\hat{x}}} \cdot \vect{W_\xi \otimes Q_T} {W_\xi' \otimes V_T^\perp} = W_\xi' \otimes Q_T + W_\xi' \otimes \nabla_{\hat{x}} V_T^\perp.
\end{equation}
Both parts fit together and therefore we define
\begin{equation}
  X_{\hat{\El}} = W_\xi' \otimes X_T,
\end{equation}
with a two-dimensional triangular element $X_T=P^{p-2}$ of order
$p-2$. In the interior domain we use volume elements of order $p-3$,
but since $L^2$-elements do not require continuity, there is no
conflict.
\begin{equation}
  \label{eq:ndofsXS}
  \dim X_{\hat{\El}} = \frac{1}{2} p (p-1) (N+2) .
\end{equation}

\section{Properties of the sequence}
\label{sec:Sequence}
The analysis for this sequence is far from being complete. However, we
are able to prove the exact sequence property locally.

\subsection{Local properties}
By construction it holds for each infinite element
\begin{eqnarray*}
  \hat \nabla W_{\hat{\El}} &\subset& 
  \{\mathbf{v} \in V_{\hat{\El}} ~|~\hat \nabla \times \mathbf{v} =0 \},\\
  \hat \nabla \times V_{\hat{\El}} &\subset& 
  \{\mathbf{v} \in Q_{\hat{\El}} ~|~\hat \nabla \cdot \mathbf{v} =0 \},\\
  \hat \nabla \cdot Q_{\hat{\El}} &\subset& X_{\hat{\El}}.
\end{eqnarray*}
Note, that constant functions are not outgoing. 
Hence $\id \setC = \ker(\nabla)$ is not meaningful in this context. 
By counting the degrees of freedom we get the following theorem.
\begin{Theo}
  \label{theo:locexseq}
  The discrete tensor product spaces defined above build a local exact sequence, i.e.
  \begin{equation}
    W_\El\ \RA{\hat \nabla} \ V_\El  \ \RA{\hat \nabla \times} \ Q_\El \ \RA{\hat \nabla \cdot} \ X_\El \ \RA{\Null} \ \{0\}.
  \end{equation}
\end{Theo}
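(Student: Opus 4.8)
The plan is to establish the theorem in two stages. First I would verify the three inclusions asserting that the composition of consecutive operators vanishes, namely that $\hat\nabla W_\El \subset \ker(\hat\nabla\times)\cap V_\El$, that $\hat\nabla\times V_\El \subset \ker(\hat\nabla\cdot)\cap Q_\El$, and that $\hat\nabla\cdot Q_\El \subset X_\El$. These inclusions are already recorded by construction in the displayed array just before the statement, so this stage reduces to citing that the tensor-product chain complex \eqref{eq:TensorChain} is genuinely a complex: the operators in \eqref{eq:ChainDia} are built from the two exact surface sequences \eqref{eq:exsec2d} together with the radial map $\partial_\xi$, and the signs in the diagonal operators of \eqref{eq:TensorChain} are arranged precisely so that successive compositions cancel. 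Thus the chain-complex property is inherited from the surface exact sequences and the commutativity of the square in \eqref{eq:ChainDia}, and I would point to the computation \eqref{eq:divQS} for the last inclusion.

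Second, and this is the substance of the proof, I would upgrade these inclusions to equalities by a dimension count, exactly as the sentence \emph{``By counting the degrees of freedom we get the following theorem''} signals. The idea is that for a finite complex of finite-dimensional spaces, if one already knows it is a complex and one knows each map has the expected surjectivity or injectivity at the ends, then exactness in the middle follows from the Euler-characteristic identity: the alternating sum of the dimensions must equal the alternating sum of the dimensions of the cohomology groups. Concretely I would assemble the four dimensions from \eqref{eq:DimHcurl}, \eqref{eq:DimHdiv}, \eqref{eq:ndofsXS} and $\dim W_\El=\tfrac12(p+2)(p+1)(N+2)$, factor out the common $(N+2)$, and check that the telescoping alternating sum of the remaining two-dimensional polynomial dimensions vanishes, which reflects the exactness of the surface sequences \eqref{eq:exsec2d} scaled by the radial factor. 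Because each individual inclusion is already known, a vanishing alternating sum of dimensions forces each inclusion to be an equality, giving exactness at $V_\El$, at $Q_\El$ and surjectivity onto $X_\El$.

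The one genuinely delicate point is the left end of the sequence, and I would address it before the dimension count rather than after. The remark immediately preceding the theorem warns that constant functions are not outgoing, so the usual statement $j\setC=\ker(\hat\nabla)$ is replaced by the assertion that $\hat\nabla$ is \emph{injective} on $W_\El$; equivalently the complex starts with a zero kernel. I would prove injectivity of $\hat\partial_\xi$ on $W_\xi$ directly from $\hat\partial_\xi = i\kappa_0\,\OpT_+\OpT_-^{-1}$ together with the injectivity of $\OpT_\pm$ already verified in the text after \eqref{eq:DefOpT}, and combine it with injectivity of $\id\otimes\nabla_{\hat x}$ coming from the surface sequence; on the tensor product $W_\xi\otimes W_T$ the combined map $\hat\nabla$ annihilates no nonzero element because a kernel element would have to lie in $\ker(\hat\partial_\xi)\otimes\ker(\nabla_{\hat x})$, which is $\{0\}$. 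This injectivity at the start is what fixes the boundary term in the Euler characteristic and lets the alternating-sum argument close.

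The step I expect to be the main obstacle is making the dimension bookkeeping airtight, since the tensor factors carry several different polynomial orders ($p$, $p-1$, $p-2$) and the two surface sequences \eqref{eq:exsec2d} must be combined correctly; a sign error or an off-by-one in the range of any index ($k=-1,\dots,N$ versus $k=0,\dots,N$, which is where the coupling degrees of freedom $\Psi_{-1}$ and $\psi_{-1}$ enter) would spoil the count. I would therefore organize the verification by the four vertical/horizontal pieces of \eqref{eq:TensorChain} separately, confirming that the alternating sum of the radial dimensions ($\dim W_\xi=N+2$ against $\dim W_\xi'=N+2$) and the alternating sum of the surface dimensions each vanish, before concluding that the total alternating sum is zero and hence that every inclusion is an equality.
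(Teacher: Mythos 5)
Your first and third stages are sound: the chain-complex inclusions do hold by construction, and your direct argument that $\hat\nabla$ is injective on $W_{\hat\El}$ (via injectivity of $\hat\partial_\xi = i\kappa_0\OpT_+\OpT_-^{-1}$, inherited from the injectivity of $\OpT_\pm$, so that $\ker(\hat\partial_\xi\otimes\id)\cap\ker(\id\otimes\nabla_{\hat x})=\ker(\hat\partial_\xi)\otimes\ker(\nabla_{\hat x})=\{0\}$) is correct. The genuine gap is your second stage. The sequence $0\to W_{\hat\El}\to V_{\hat\El}\to Q_{\hat\El}\to X_{\hat\El}\to 0$ has \emph{two} interior positions, and an Euler-characteristic identity cannot close the argument there: writing $d_1,d_2,d_3$ for the defects (codimension of the image inside the kernel) at $V_{\hat\El}$, $Q_{\hat\El}$ and $X_{\hat\El}$, the vanishing of the alternating dimension sum together with injectivity at $W_{\hat\El}$ yields only $d_2=d_1+d_3$, since $d_1$ and $d_2$ enter the identity with opposite signs; even after proving surjectivity onto $X_{\hat\El}$ (so $d_3=0$) one is left with $d_1=d_2$, which does not force $d_1=d_2=0$. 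A minimal counterexample to the principle you invoke: $W=0$, $V=Q=\setC$, $X=0$ with all maps zero is a complex, exact at both ends, with vanishing alternating sum, yet exact at neither middle position --- the two defects simply cancel. So ``a vanishing alternating sum forces each inclusion to be an equality'' is false whenever more than one interior position is present, and this is precisely the situation here.

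What the paper does instead --- and what you would need to add --- is a rank lower bound for each individual operator, chained right-to-left by rank-nullity. From \eqref{eq:divQS}, the image $\hat\nabla\cdot Q_{\hat\El}$ contains $W_\xi'\otimes Q_T$, whose dimension equals $\dim X_{\hat\El}=\tfrac12 p(p-1)(N+2)$ by \eqref{eq:ndofsXS}; hence $\hat\nabla\cdot Q_{\hat\El}=X_{\hat\El}$, and rank-nullity with \eqref{eq:DimHdiv} gives $\dim\ker(\hat\nabla\cdot)=(N+2)(p+1)p$. Next one needs the \emph{separate} estimate $\dim(\hat\nabla\times V_{\hat\El})\geq(N+2)(p+1)p$, obtained by exhibiting an explicit subspace of that dimension inside the image of the block operator displayed at the start of Section \ref{sec:Hdivelem} (for instance the image of $\{0\}\times(W_\xi\otimes V_T)$, which is injected by the component $\hat\partial_\xi\otimes\id$); combined with the inclusion this forces $\hat\nabla\times V_{\hat\El}=\ker(\hat\nabla\cdot)$, and rank-nullity with \eqref{eq:DimHcurl} gives $\dim\ker(\hat\nabla\times)=\tfrac12(p+2)(p+1)(N+2)$. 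Only then does your injectivity of $\hat\nabla$, which gives $\dim\hat\nabla W_{\hat\El}=\dim W_{\hat\El}=\tfrac12(p+2)(p+1)(N+2)$, finish the proof with $\hat\nabla W_{\hat\El}=\ker(\hat\nabla\times)$. In short, the ``counting of degrees of freedom'' the paper alludes to is this per-operator rank bookkeeping, not a global Euler characteristic; your plan is missing the two middle rank bounds, and without them the argument does not close.
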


\begin{proof}
For a linear operator $T$ on a finite dimensional space $D$ we have
the identity $\dim D=\dim T(D) + \dim \ker(T)$. From \eqref{eq:divQS}
we conclude, that $\dim \hat \nabla \cdot( Q_{\hat{\El}}) \geq
\frac{1}{2}p(p-1)(N+2)$, since $\hat \partial_r W_r \otimes Q_t$ has
this dimension. With \eqref{eq:ndofsXS} we get $\hat \nabla \cdot(
Q_{\hat{\El}})= X_{\hat{\El}}$ and with \eqref{eq:DimHdiv} $\dim
\ker(\hat \nabla \cdot)= (N+2)(p+1)p $. Again, since $\dim \hat\nabla
\times V_{\hat{\El}}\geq (N+2)(p+1)p$, we deduce $\hat\nabla \times
V_{\hat{\El}}=\ker(\hat \nabla \cdot)$ and using \eqref{eq:DimHcurl}
$\dim \ker(\hat \nabla \times)= \frac{1}{2}(p+2)(p+1)(N+2)$ . Last,
$\dim \hat \nabla W_{\hat{\El}}\geq \frac{1}{2}(p+2)(p+1)(N+2)$ and
hence $\hat \nabla W_{\hat{\El}}=\ker(\hat \nabla \times)$ and
$\ker(\hat \nabla)=\{0\}$.
\end{proof}

\subsection{Global properties}
For the global finite element spaces in the exterior domain we define the spaces for the different kinds of degrees of freedom
\begin{subequations}
\label{eq:DefWext}
\begin{eqnarray}
 W_{\rm ext}^{\rm V} &:=& \bigcup_{V } ~\{ W^{V} \},\\
 W_{\rm ext}^{\rm E} &:=& \bigcup_{E } ~\{ W_l^E ~|~l=1,...,p-1\},\\
 W_{\rm ext}^{\rm T} &:=& \bigcup_{T } ~\{ W_l^T ~|~l=1,...,\frac{1}{2}(p-2)(p-1)\},\\
 W_{\rm ext}^{\rm R} &:=& \bigcup_{R } ~\{ W_k^R ~|~k=0,...,N\},\\
 W_{\rm ext}^{\rm F} &:=& \bigcup_{F } ~\{ W_{k,l}^F ~|~k=0,...,N, l=1,...,p-1\},\\
 W_{\rm ext}^{\rm \El} &:=& \bigcup_{\El } ~\{ W_{k,l}^\El ~|~k=0,...,N, l=1,...,\frac{1}{2}(p-2)(p-1)
\end{eqnarray}
\end{subequations}
and finally collect them together to
\begin{equation}
 W_{\rm ext}:=W_{\rm ext}^{\rm V} \cup W_{\rm ext}^{\rm E} \cup W_{\rm ext}^{\rm T} \cup W_{\rm ext}^{\rm R} \cup W_{\rm ext}^{\rm F} \cup W_{\rm ext}^{\rm \El}.
\end{equation}
The spaces $V_{\rm ext}$, $Q_{\rm ext}$ and $X_{\rm ext}$ are defined analogously. In this way theorem \ref{theo:locexseq} should carry over into the global finite element spaces.

\section{Numerical results}
\label{sec:Numerics}
Although there is no convergence result of the Hardy space method for
electromagnetic scattering or resonance problems, we can expect from
the results for scalar problems (see~\cite{HohageNannen:09} and the
convergence plots in~\cite{NannenSchaedle:09}) super algebraic
convergence of the method w.r.t. the number of degrees of freedom in
the Hardy space. In order to substantiate this predication, we give in
the following some numerical tests for scattering as well as resonance
problems. Especially the results of the challenging problems in
Sections~\ref{sec:MagnDipole}, \ref{sec:ResPyramid} and \ref{sec:hdiv}
indicate, that only a few basis functions in the Hardy space are
needed.

For all computations the mesh generator netgen~\cite{netgen} together
with the high order finite element code ngsolve is used for the
interior problem. Using a source term $\LiF{\fz}\neq0$
in~\eqref{eq:Maxwell} or~\eqref{eq:Helmholtz} and a given wavenumber
$\kappa>0$ leads to a system of linear equations, which we solve
directly with the package PARDISO~\cite{Pardiso}. A resonance problem
consist of finding $\kappa$ with $\Re(\kappa)>0$ and non-trivial
resonance functions $\fe$ solving the homogeneous (no sources,
vanishing boundary conditions) problems \eqref{eq:Maxwell},
\eqref{eq:Helmholtz} or \eqref{eq:Helmholtzdiv} and results in a
generalized matrix eigenvalue problem of the form
\begin{equation*} S \fe_h = \kappa_h^2 M u_h
\end{equation*} with symmetric, non-hermitian complex matrices $S$ and
$M$. This problem is solved using a shifted Arnoldi algorithm.

\subsection{Resonances of a Helmholtz resonator}

\begin{figure}
 \centering
\resizebox{0.8\textwidth}{!}{\input{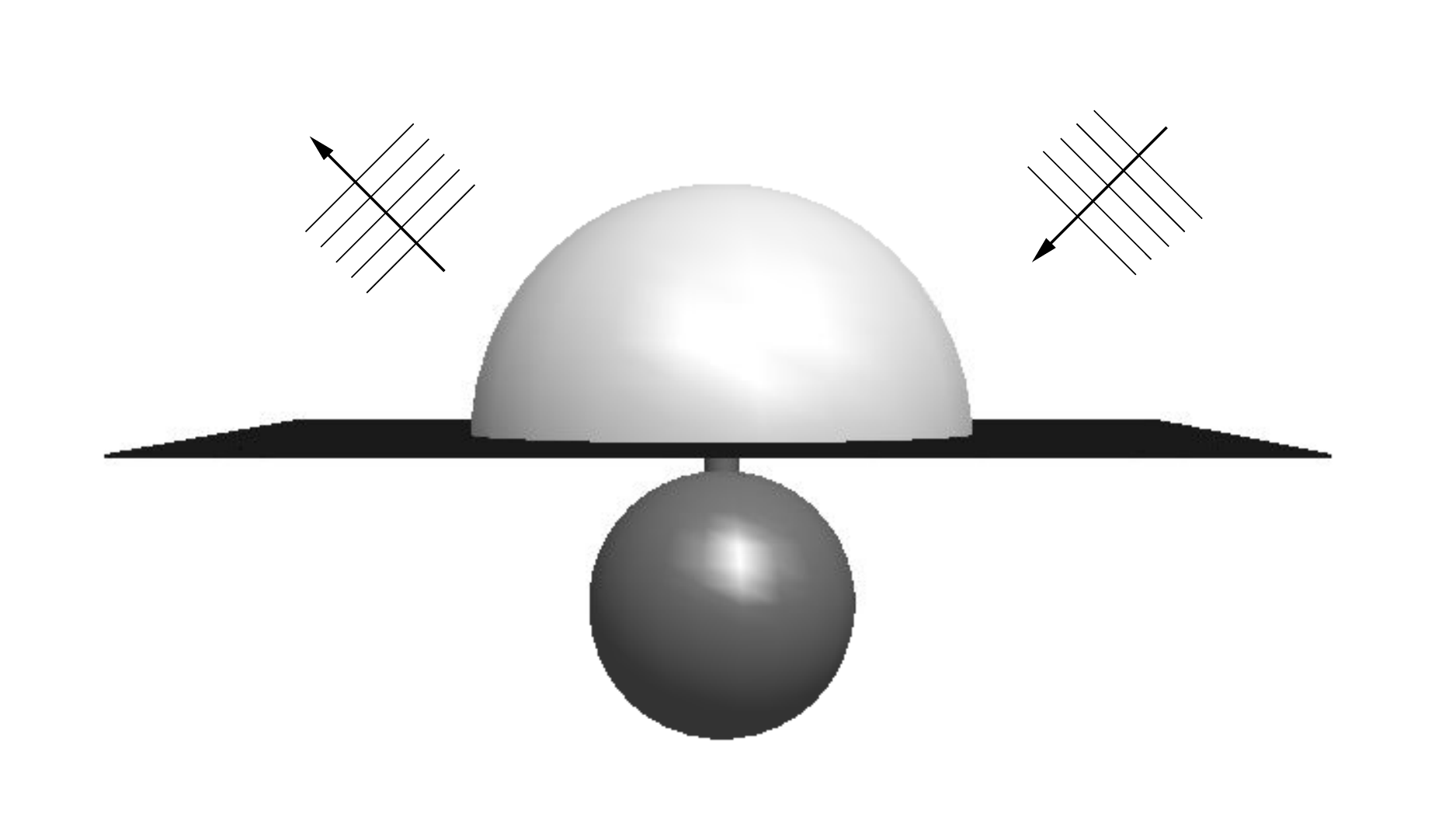_t}}
\caption{Helmholtz resonator}
\label{fig:HelmResPlot}
\end{figure}

In the first example we compute resonances of a three dimensional
acoustic Helmholtz resonator and show the system response to a source
given by an incident plane wave $u_i(x)=e^{i\kappa d\cdot x}$ with 
$d=(-1/ \sqrt{2},0,-1/ \sqrt{2})^T$ and fixed wave number
$\kappa>0$. A single resonator consists of a ball with radius $0.062$, which
is connected via a cylinder with radius $0.0098$ and length $0.5$ to the
half space $\{x \in \setR^3:~x_3\geq 0\}$, cf. Fig. \ref{fig:HelmResPlot}. 
The infinite half space is truncated
using a half ball in the case of the scattering problem and a cube in
the case of the resonance problem.

We assume sound-hard boundary conditions at the boundary of the
resonator as well as at the infinite plane $\{x \in \setR^3:~x_3=
0\}$. For the Hardy space method no sources outside the finite element
domain $\Oi$ are allowed and the solution of the problem needs to be
outgoing in $\Oe$. Therefore, in $\Oi$ we compute the total field $u$
and in $\Oe$ the field $u_s=u-u_i-u_r$ with $u_r$ being the reflected
wave of the unperturbed half space $u_r(x)=e^{i\kappa \tilde d \cdot
x}$ with $\tilde d=(-1/ \sqrt{2},0,1/ \sqrt{2})^T$. $u_s$ is
outgoing with vanishing Neumann boundary values at the infinite plane,
since it is the perturbation of $u_i+u_r$ caused by the
resonator. Using $u$ in $\Oi$ and $u_s$ in $\Oe$ leads to a jump
condition at the artificial boundary and an additional boundary term
for the jump in the Neumann values.

\begin{figure}
  \centering
 \subfigure[$\kappa\approx 2.121- 3.36 \cdot 10^{-3}i$]{
   \includegraphics[width=0.25\textwidth]{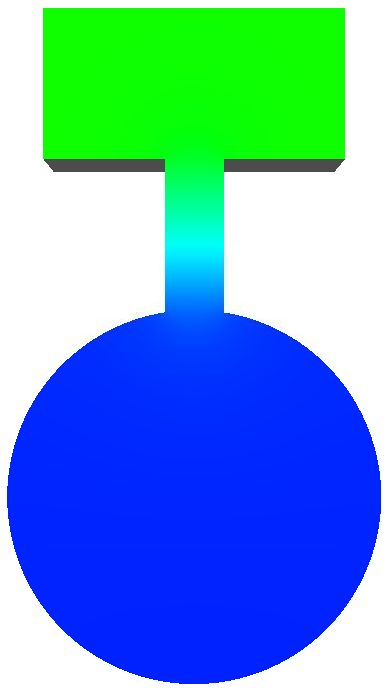}}\hfill
 \subfigure[$\kappa\approx 33.34- 3.66 \cdot 10^{-2}i$]{
   \includegraphics[width=0.25\textwidth]{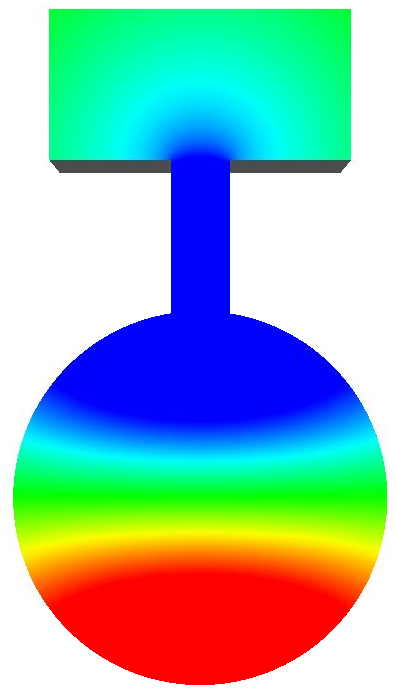}}\hfill
 \subfigure[$\kappa\approx 33.59- 6.85  10^{-11}i$]{
   \includegraphics[width=0.25\textwidth]{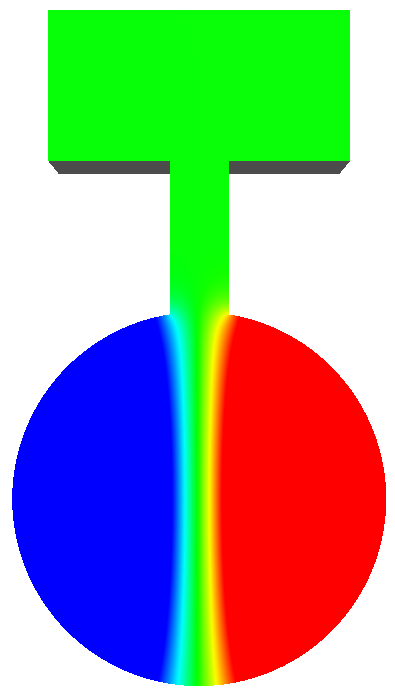}}
 \caption{Real part of three resonance functions and corresponding resonances of a single Helmholtz 
   resonator with volume $1 \mathrm{l}$, 
   length of the neck $5\mathrm{cm}$ and cross sectional area of the neck $3\mathrm{cm}^2$.}
 \label{fig:SingleHelmResonator}
\end{figure}

Fig.~\ref{fig:SingleHelmResonator} gives cross-sections of a single
Helmholtz resonator with three different resonance functions and the
corresponding resonances. For $c=343 m/s$ we calculate the
frequency $f=c \Re(\kappa)/2 \pi \approx 115.8 Hz $,
of the most relevant first resonance which has a quality factor
$Q=\Re(\kappa)/|\Im(\kappa)|\approx 631$. The approximative formula
(see e.g. \cite{Schroederetal:07}) for a Helmholtz resonator with
volume $V$, neck length $L$ and cross sectional area of the neck $S$ gives
$ f=\frac{c}{2 \pi} \sqrt{\frac{S}{L V}} \approx 133 Hz$.

For these computations a mesh of $2477$ tetrahedrons with finite
element order $7$, the number of degrees of freedom in radial
direction $N=15$ and $\kappa_0=8$ lead to approximately $155000$
degrees of freedom. Note, that the third resonance in
Fig.~\ref{fig:SingleHelmResonator} has multiplicity $2$.

\begin{figure}
  \centering
 \includegraphics[width=0.45\textwidth]{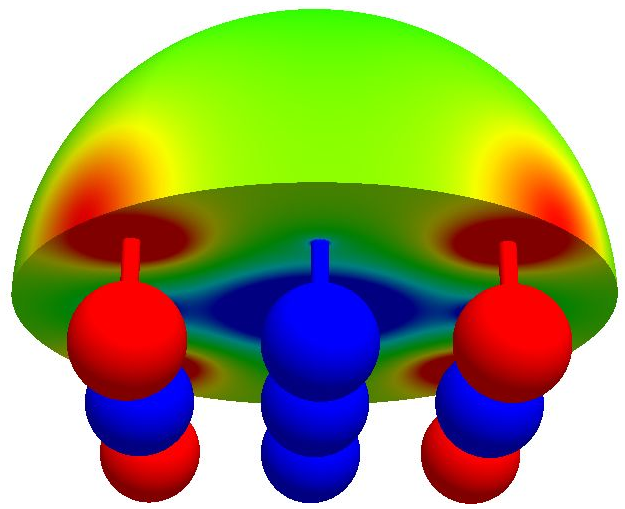}
 \caption{Real part of the total field to an incident plane wave 
   $u_i(x)=e^{i\kappa d\cdot x}$ with $d=(-1/ \sqrt{2},0,-1/ \sqrt{2})^T$ 
   and $\kappa=2.1241$. 
   The values of the total field vary between $\pm 2000$, 
   but the figure is limited to values between $\pm 10$.}
 \label{fig:HelmResonator}
\end{figure}

Fig.~\ref{fig:HelmResonator} shows the real part of the total field to
an incident plane wave with wavenumber $\kappa=2.1241$ for $9$ uniformly
distributed resonators. For this system of resonators the first $9$
resonance frequencies are between $114.8 Hz$ ($\kappa\approx 2.103$)
and $116.1 Hz$ ($\kappa\approx2.126$) with quality factors between
from $162$ to $864765$. For the resonance computations of the system of
resonators we used a mesh with $24394$ tetrahedrons, finite
element order $7$, $N=15$ and $\kappa_0=4$.

\subsection{Convergence test of a magnetic dipole}
\label{sec:MagnDipole}
\begin{figure}
 \centering
\subfigure{\includegraphics[width=0.3\textwidth]{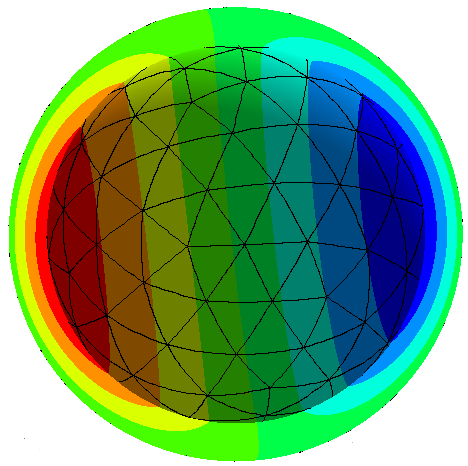}} \hspace{0.1\textwidth}
\subfigure{\includegraphics[width=0.45\textwidth]{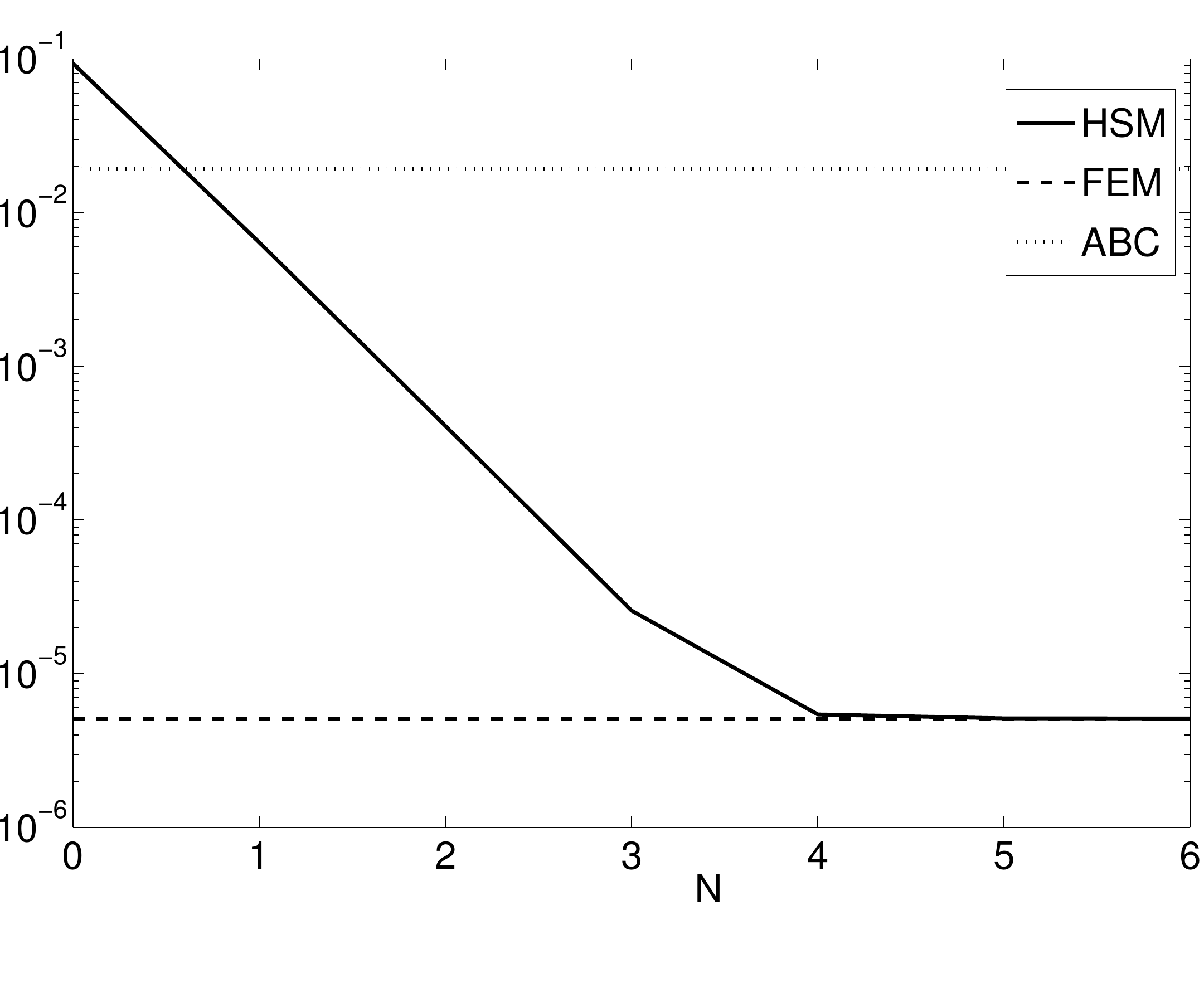}}
\subfigure{\includegraphics[width=0.3\textwidth]{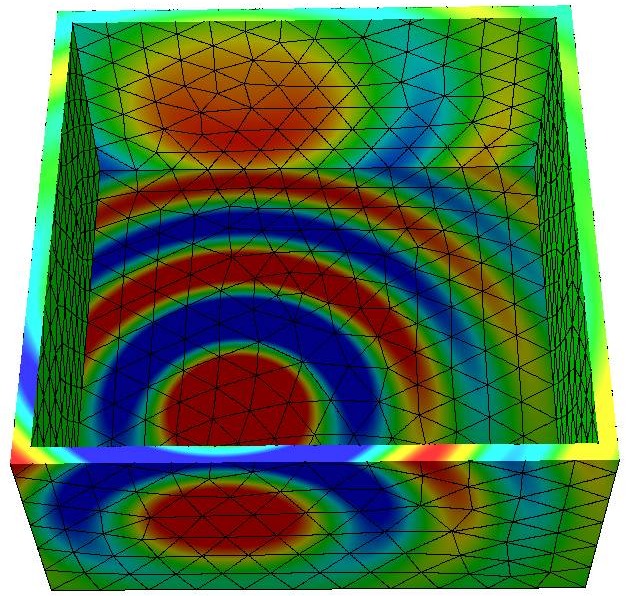}} \hspace{0.1\textwidth}
\subfigure{\includegraphics[width=0.45\textwidth]{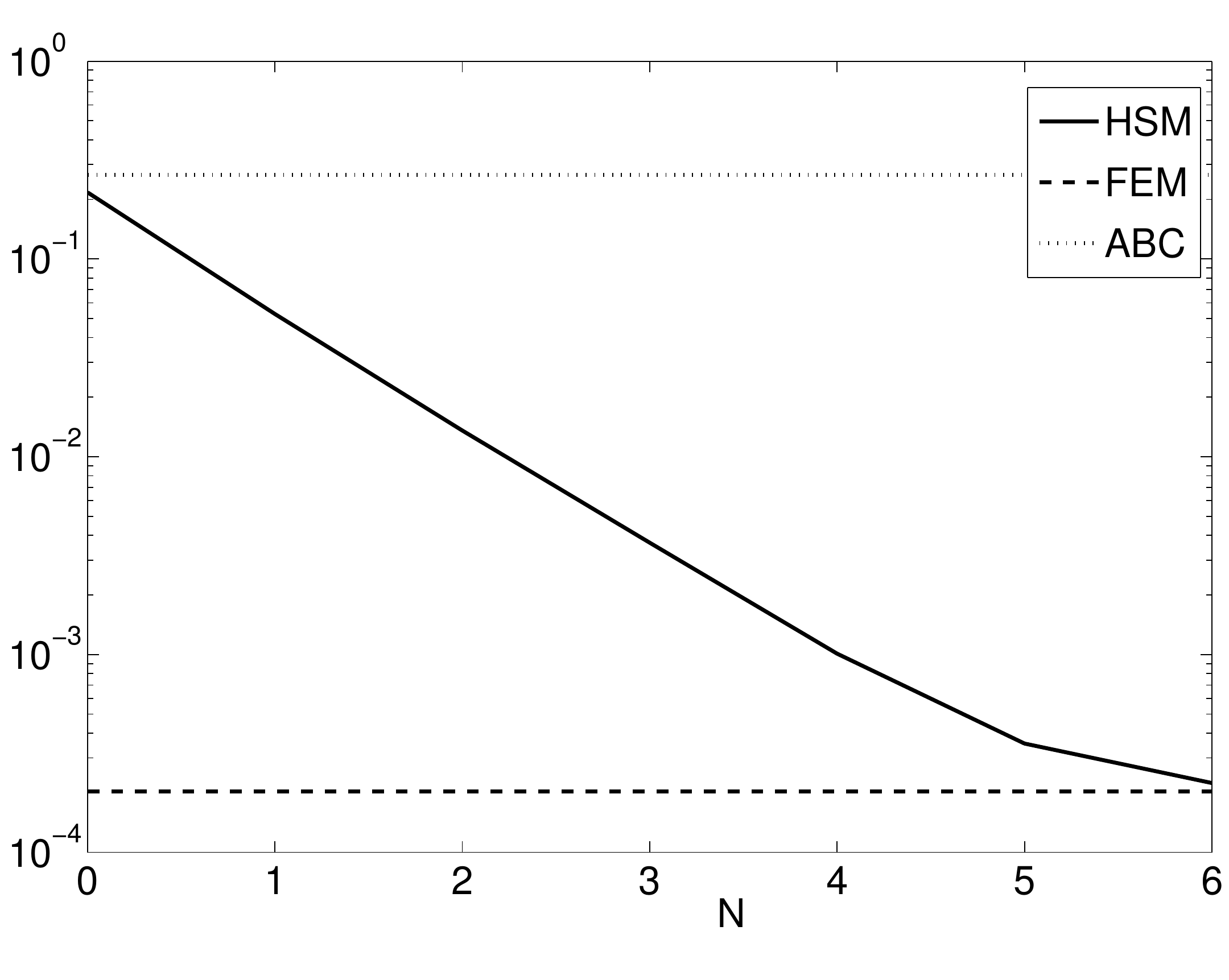}}
\caption{Right: $H(\curl, \Oi)$-error of the HSM w.r.t. the number $N$
of degrees of freedom in radial direction compared to the finite
element error and the error of a first order absorbing boundary
condition for the two different domains $\Oi$ on the left; Left: Cross-section of one Cartesian component of a
magnetic dipole}
\label{fig:error}
\end{figure}

In this example we resolve a magnetic dipole located at a point $y$
\begin{equation*} 
  E_y (x) = \nabla_x \times \left( \frac{e^{i \kappa
        |x-y|}}{|x-y|} \vector{1}{1}{1} \right)
\end{equation*} 
in two different geometries (see
Fig. \ref{fig:error}). $E_y$ is a radiating solution of
\eqref{eq:Maxwell} with given wavenumber $\kappa>0$ and an unbounded
domain $\Omega$, which does not contain the center $y$. \par

In the upper part of Fig. \ref{fig:error} the interior domain is the
intersection of two balls with radius $5$ and $6$ ( $\Oi=B_6 \setminus
B_5$), $\kappa=1$ and the center of the dipole is the origin
($y=0$). For the interior boundary $\partial B_5$ we use a Dirichlet
boundary condition given by the tangential part of $E_y$. For the
exterior boundary $\partial B_6$ we have three different cases: First
we again use the exact Dirichlet boundary condition in order to
compute the error of the finite element discretization of $\Oi$ with
polynomial order $6$. 
Second we use the first order absorbing boundary condition
$ \nabla \times E \times \nu - i \kappa E = 0 $
coming from the Silver-M\"uller radiation condition, which can be
applied without spending additional degrees of freedom. 
Last we use the Hardy space method with $\kappa_0=10$, reference point $V_0=(0,0,0)$ 
and $N=0,\dots,8$ , which leads to $102279$ ($N=0$) up to $229975$ ($N=8$)
degrees of freedom.

In the lower part of Fig. \ref{fig:error} the interior domain is the
intersection of two cubes $\Oi=[-3.2,3.2]^3 \setminus [-3,3]^3$, $\kappa=5$ and the
dipole is located at $y=(1,1.5,-1)^T$. The finite element method needs
$555480$ degrees of freedom for polynomial order $5$; together with
the Hardy space method with $\kappa_0=12.5$ and $V_0=(0,0,0)^T$ we
get $584642$ up to $1080374$ degrees of freedom. 

Both cases show a fast convergence of the Hardy space method, such that 
setting $N=4$ ($N=6$) suffices to reach the finite element error.

\subsection{Cavity resonances}

\begin{figure}
 \centering
\subfigure{\includegraphics[width=0.4\textwidth]{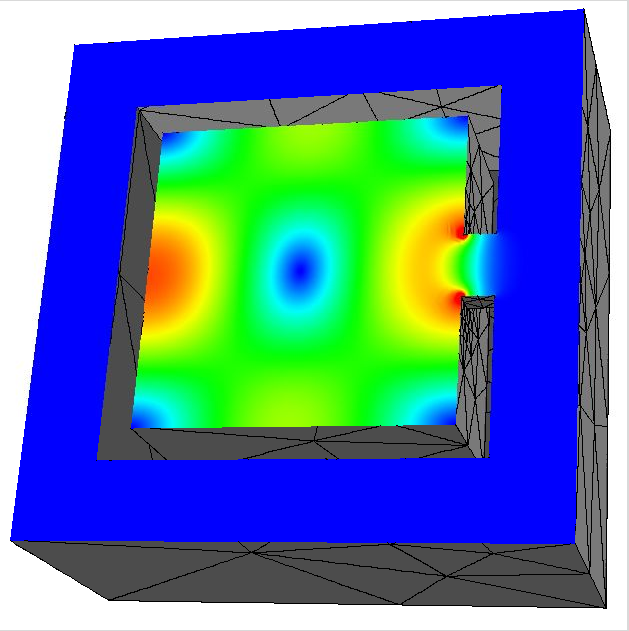}}\hfill
\subfigure{\includegraphics[width=0.4\textwidth]{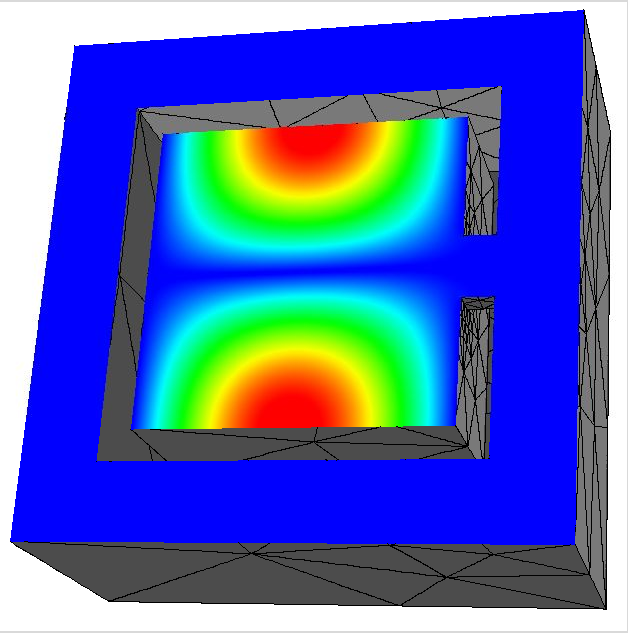}}
\caption{Cross-section of the absolute value of the two resonance
functions of the resonances close to $\kappa =\sqrt{3} \frac{\pi}{2} \approx 2.72$}
\label{fig:MaxResFunc}
\end{figure}

Here we search for resonances $\kappa\neq 0$ and
radiating electric fields $E \neq 0$ solving \eqref{eq:Maxwell} for
$\Omega = \setR^3 \setminus K$ and
$K=[-1.2,1.2]^3 \setminus \left([-1,1]^3 \cup [1,1.2]\times [-0.2,0.2]^2 \right)$.
Additionally, $E$ has to satisfy at $\partial K$ the perfectly
conducting boundary conditions
$
E \times \nu = 0
$
with $\nu$ being the outward normal vector. This problem is an
extension of the two dimensional acoustic open cavity problem
in~\cite{HohageNannen:09}. A similar acoustic problem is treated in~\cite{Marburg:06} 
with boundary element methods. 

In Fig.~\ref{fig:MaxResFunc} the absolute value of two resonance
functions on a cross-section of the interior domain
$\Oi= [-1.7,1.7]^3 \cap \Omega$ is shown. 
For a closed cavity ($\Omega=[-1,1]^3$), the
resonances are positive and analytically given by (see
\cite{AdamArbenzGeus:97})
$ \kappa=\sqrt{l +m + n } \frac{\pi}{2}$ for 
$l,m,n\in \setN_0 $ such that $ lm+ln+mn >0$
The resonance functions in Fig.~\ref{fig:MaxResFunc} belong to
resonances close to the second cavity resonance $\kappa =\sqrt{3}\frac{\pi}{2}$ 
with multiplicity $2$.

\begin{figure}
 \centering
\includegraphics[width=0.9\textwidth]{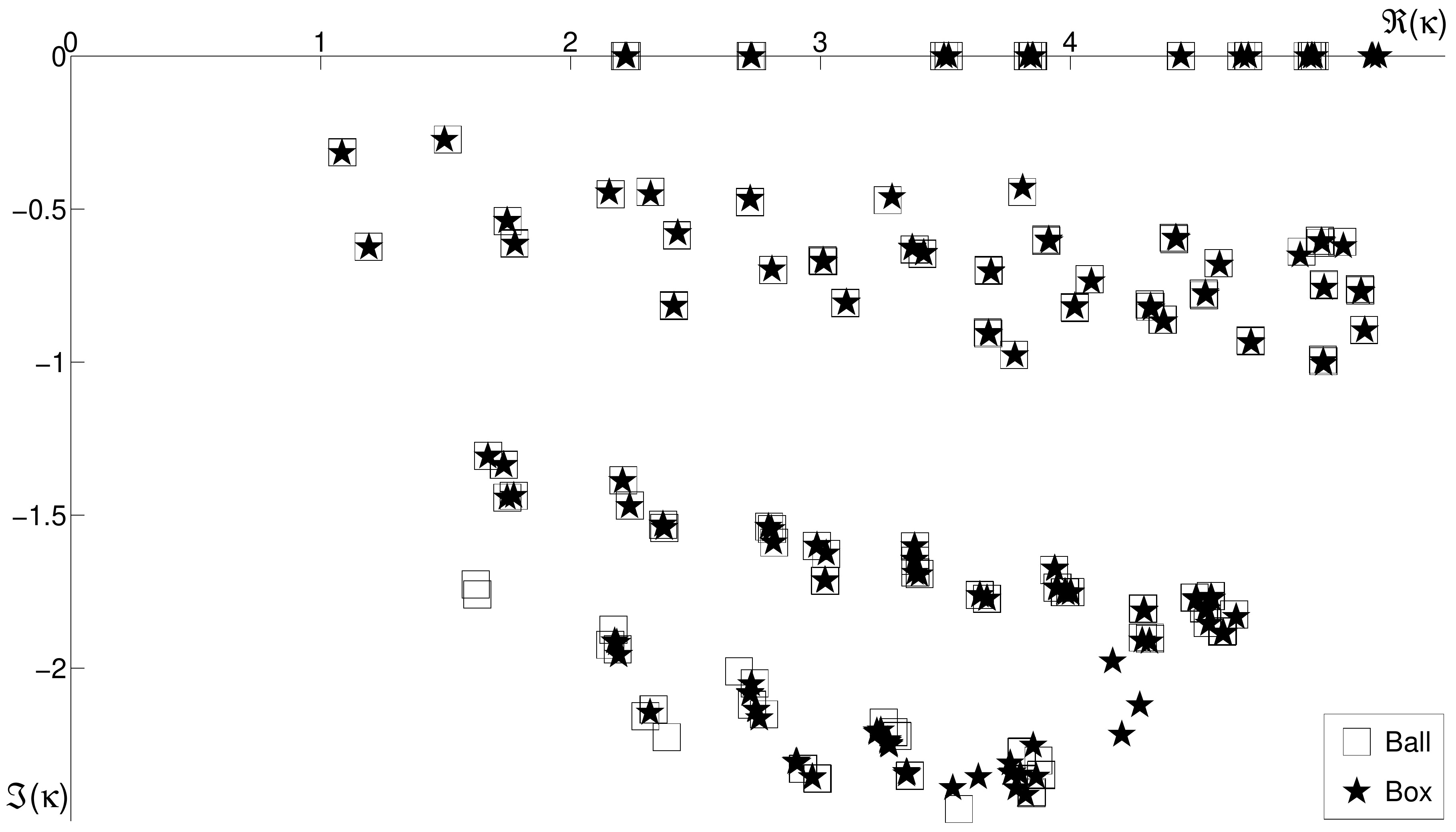}
\caption{Resonances of an open cavity for two different discretizations: 
  First $\Oi=B_{2.5} \cap \Omega$, $\kappa_0=5$, $N=10$ and finite element order $5$. 
  Second $\Oi= [-1.7,1.7]^3 \cap \Omega$, $\kappa_0=4-i$, $N=6$ and finite element order $6$}
\label{fig:MaxRes}
\end{figure}

Fig.~\ref{fig:MaxRes} shows the real and imaginary part of the
computed resonances for two different discretizations. For the first
we use the domain $\Oi=B_{2.5} \cap \Omega$ with
$\kappa_0=5$, $N=10$ and finite element order $5$ and in total
$358924$ degrees of freedom. For the second dicretization $\Oi=
[-1.7,1.7]^3 \cap \Omega$, $\kappa_0=4-i$, $N=6$ and finite element
order $6$ lead to $390548$ degrees of freedom. Both discretizations
give similar results for the cavity resonances near the real axis. The
multiplicity of the resonances is not visible in
Fig.~\ref{fig:MaxRes}. It is the same as expected from the resonances of
the closed cavity given in \cite{AdamArbenzGeus:97}. The exterior
resonances with in absolute values larger imaginary parts are mostly
identical for the two discretizations, but for the resonances at the
bottom of Fig.~\ref{fig:MaxRes} the discretizations are too coarse.

\subsection{Resonances of GaAs pyramidal micro-cavities}
\label{sec:ResPyramid}
A second example of cavity resonances is taken from~\cite{Karletal:09}.
The cavity is a pyramid with height $0.14142$
and a quadratic base of length $0.28284$ which is turned up-side down
and mounted on top of an infinite pyramid. Choosing the apex of the pyramids as 
reference point $V_0 = (0,0,0)$, the infinite pyramid is bounded by 
the infinite rays in direction $(1,1,-1)^T$, $(-1,1,-1)^T$, $(-1,1,-1)^T$ and
$(-1,-1,-1)^T$.  The computational domain $\Oi$ is a cuboid given by the 
vertices $(-0.2, -0.2, -0.1)$ and $(0.2,0.2,0.2)$. 
The pyramids are made of GaAs ($\ep=12.25$). In contrast to the first
example the exterior domain outside the plotted interior domain
consist of two different materials, namely air ($\ep=1$) and the
infinite GaAs pyramid ($\ep=12.25$). 

\begin{figure}
\centering
\subfigure{\includegraphics[width=0.45\textwidth]{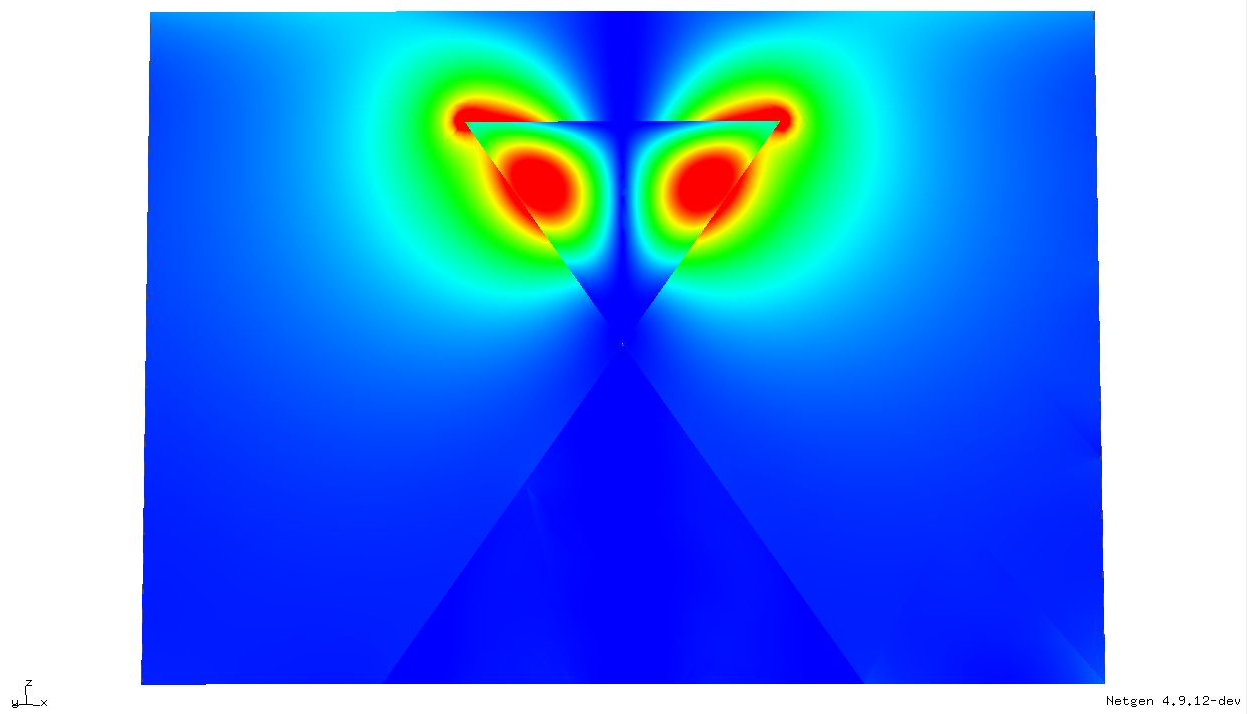}} 
\hfill
\subfigure{\includegraphics[width=0.54\textwidth]{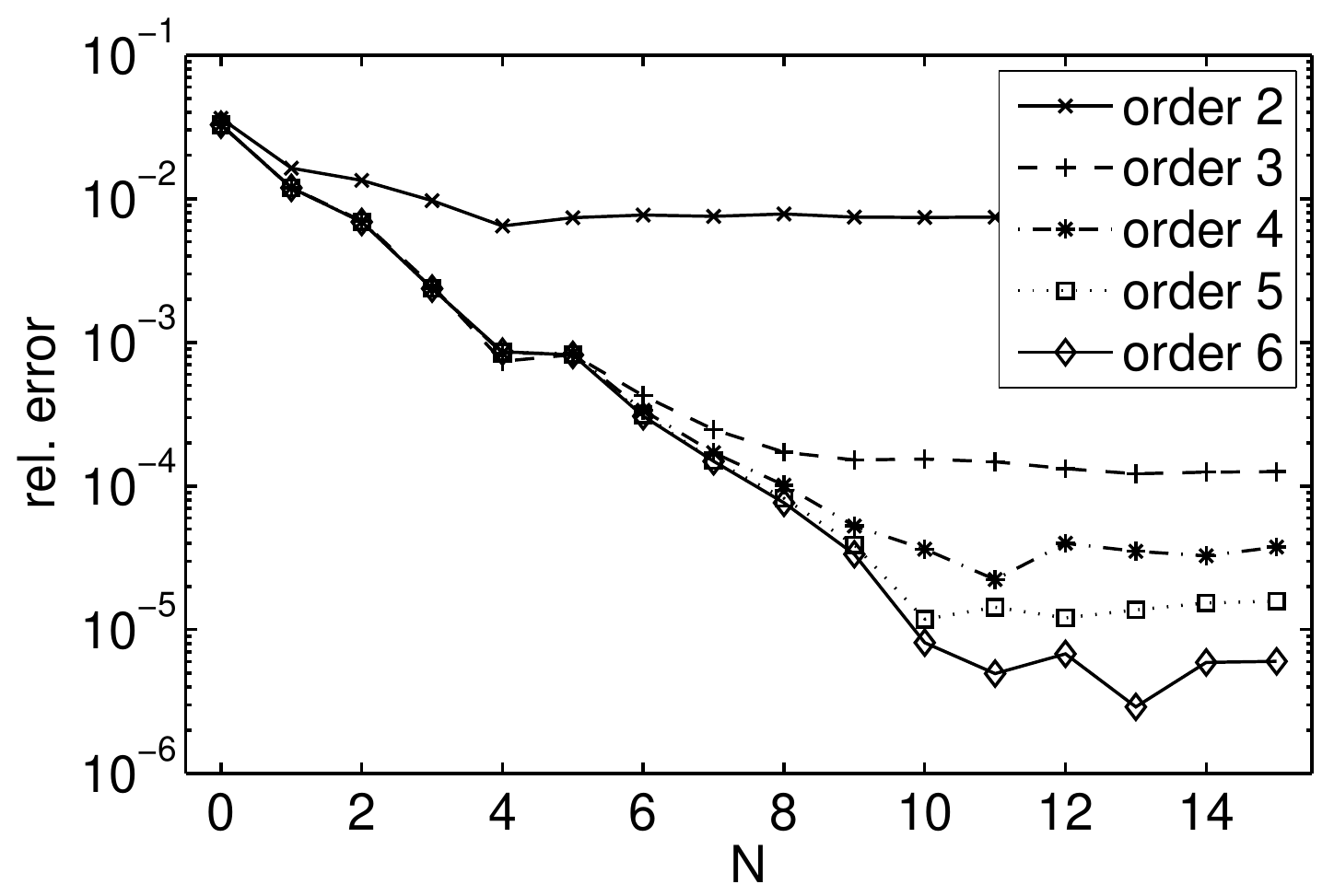}}
\caption{Left: Cross-section of the field intensity of the resonance
function to the resonance $\kappa \approx  12.3034  - 0.8816 i$. Right:
Relative error of the computed resonance w.r.t. the number of degrees
of freedom in radial direction for different finite element orders.}
\label{fig:ResInvPyr}
\end{figure}

In Fig.~\ref{fig:ResInvPyr} a cross-section of the field intensity of
the resonance field for the resonance $\kappa \approx 12.3034 - 0.8816
i$ is given.

To compute a reference solution $\Oi$ is discretized by 2477
tetrahedrons and finite elements of order $7$ leading to approximately
1.5 million unknowns for the interior domain. For the exterior domain
$18$ degrees of freedom in the radial direction cause in total 2
million unknowns. The error in Fig.~\ref{fig:ResInvPyr} is the
relative error of the computed resonance with respect to the reference
resonance $\kappa \approx 12.3034 - 0.8816 i $ for different finite
element orders and different numbers of degrees of freedom in radial
direction. The results indicate that depending on the finite element
order only 8 to 10 degrees of freedom in radial direction are needed.

\subsection{$(H(\divo),L^2)$ resonance test}
\label{sec:hdiv}

\begin{figure}
 \centering
\includegraphics[width=0.9\textwidth]{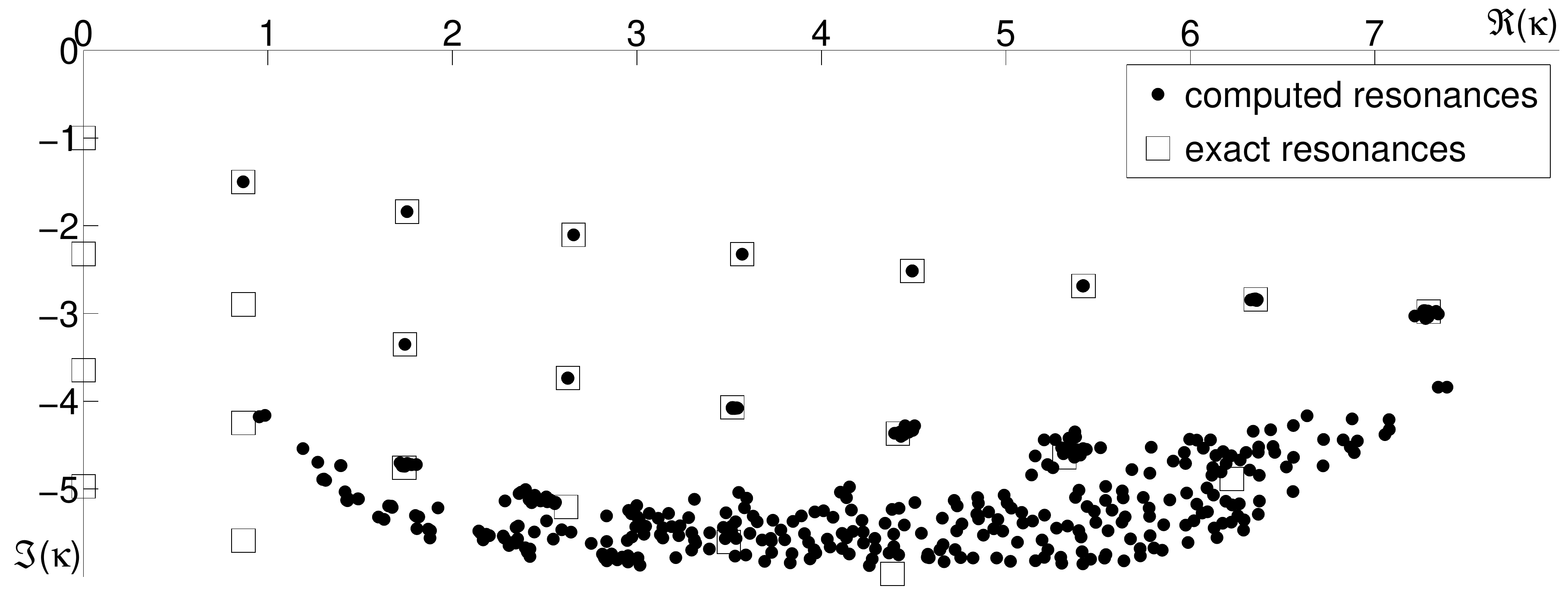}
\caption{computed and exact acoustic resonances with $\Omega=\setR^3 \setminus B_1$,  
$\Oi=[-1.2,1.2]^3 \setminus B_1$, $\kappa_0=5-i$, reference point $V_0=0$ and $N=15$}
\label{fig:MixedBallRes}
\end{figure}

In the last numerical test we compute acoustic resonances $\kappa$
outside a sphere with radius $1$. In this case the exact resonances
are given by the roots of the spherical Hankel functions of the first
kind, see~\cite[Example 3.24]{Nannen:08}. The multiplicity of a
resonance is $2n+1$ if the resonance is the root of the $n$th Hankel
function. Instead of solving the Helmholtz
problem~\eqref{eq:Helmholtz}, we solve the mixed formulation
\eqref{eq:Helmholtzdiv} and use the $H(\divo)$ and $L^2$ elements of
Sections~\ref{sec:Hdivelem} and~\ref{sec:Ltelem}.

\begin{figure}
 \centering
\subfigure{\includegraphics[width=.35\textwidth]{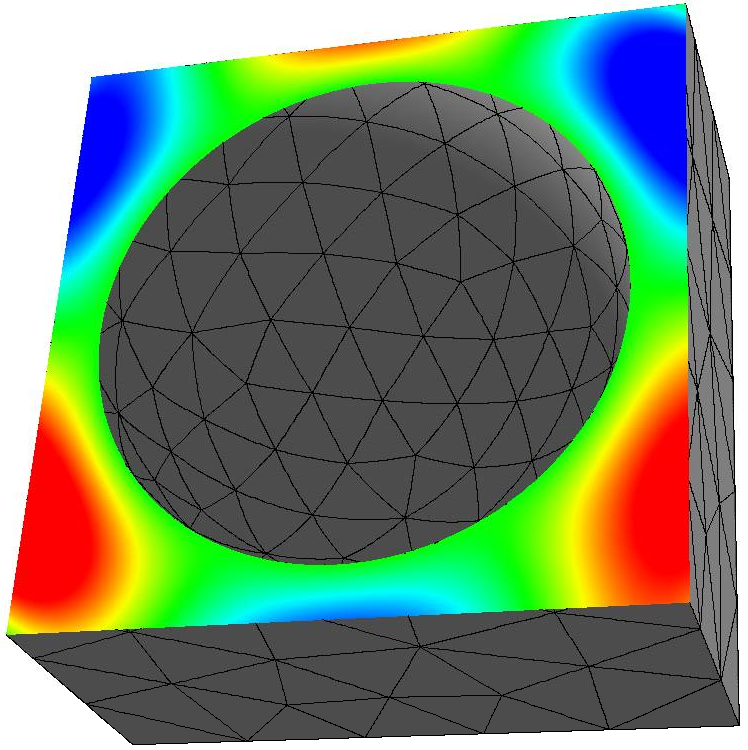}}\hfill
\subfigure{\includegraphics[width=.6\textwidth]{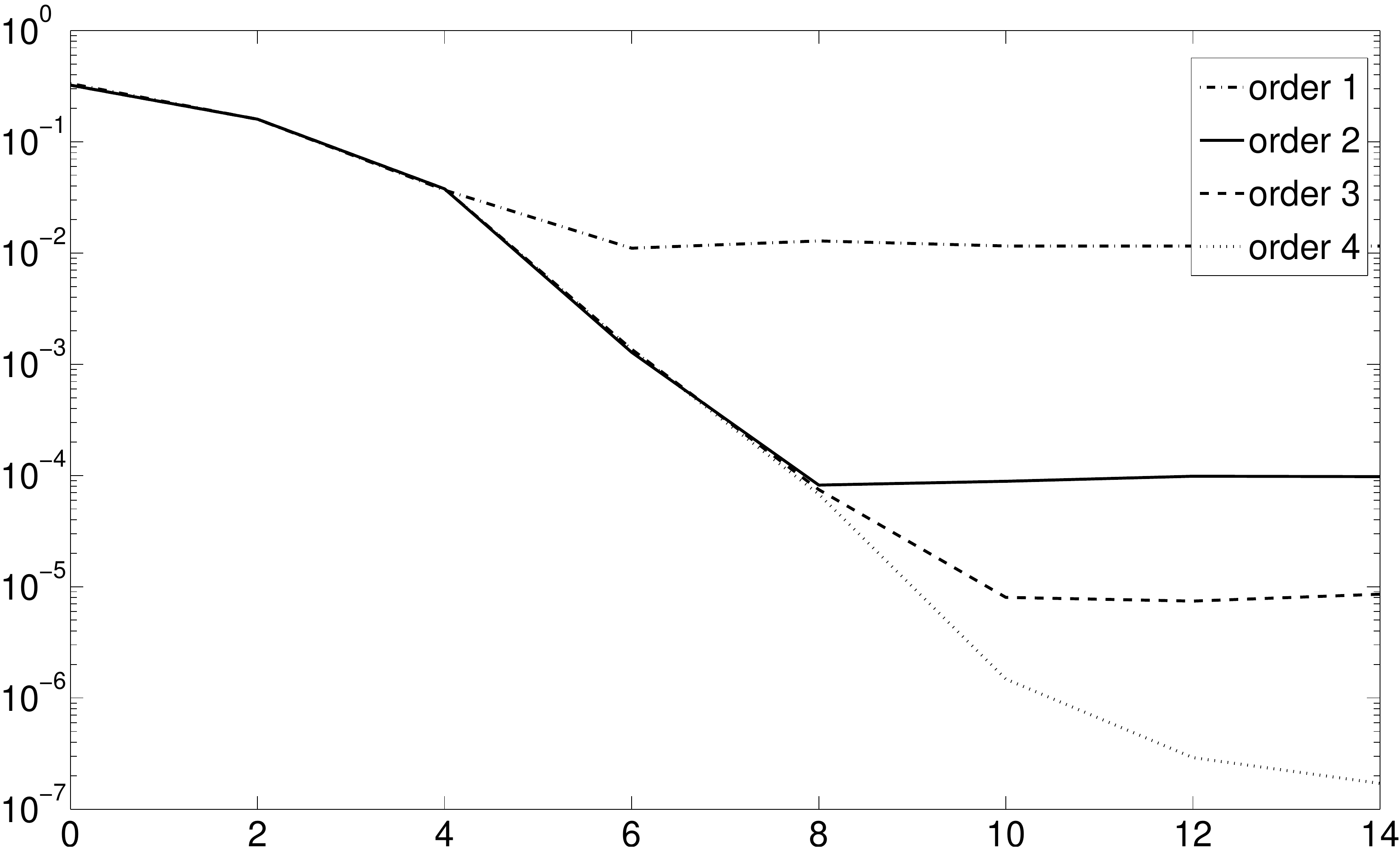}}
\caption{Left: Cross-section of one of the resonance functions to the resonance 
  $\kappa \approx 1.754 -1.839i$; right: relative error for different finite element 
  orders and different $N$ of the computed resonance against the analytical one with 
  $\Oi=[-1.2,1.2]^3 \setminus B_1$, $\kappa_0=7-0.3 i$ and reference point $V_0=0$}
\label{fig:MixedBallConv}
\end{figure}

In Fig. \ref{fig:MixedBallRes} the computed resonances $\kappa$ for
$\Oi=[-1.2,1.2]^3 \setminus B_1$, finite element order $3$,
$\kappa_0=5-i$, reference point $V_0=0$ and $N=15$ are compared to the
analytical ones. Again, for the resonances with in absolute values
larger imaginary part the discretization with in total $397316$
degrees of freedom is too coarse.

Fig. \ref{fig:MixedBallConv} shows the relative error of one computed
resonance against one of the roots of the third spherical Hankel
function of the first kind. In comparison to the results of
Sec.~\ref{sec:MagnDipole} and Sec.~\ref{sec:ResPyramid} more degrees
of freedom in radial direction are needed, since the Hardy space
method has to resolve the Hankel function. In total $27256$ (order
$1$, $N=0$) up to $549920$ (order $4$, $N=14$) degrees of freedom are
used.

\section*{Acknowledgement}
We gratefully acknowledge financial support from Deutsche
Forschungsgemeinschaft (DFG) in programs HO 2551/5-1 
and Matheon D9. Some parts of this work were developed at the Center for Computational
Engineering Science (CCES) of RWTH Aachen University in summer
2009.

\bibliographystyle{abbrv} \bibliography{bibliography}
\end{document}